\newtheorem{te}{Theorem}[section]
\newtheorem{co}{Corollary}[section]
\newtheorem{lm}{Lemma}[section]
\newtheorem{de}{Definition}[section]
\newtheorem{re}{Remark}[section]
\newcommand\R{{\mathbb R}}
\begin{document}

\title[Fibrations Over Singular K3 Surfaces]{Fibrations Over Singular K3 Surfaces and New Solutions to the Hull-Strominger System}

\author{Anna Fino, Gueo Grantcharov, Jose Medel}
\subjclass[2000]{Primary 32J81; Secondary  53C07.}
\keywords{Hull-Strominger system, K3 orbifold, stable bundle}
\address{Dipartimento di Matematica \lq\lq Giuseppe Peano\rq\rq \\ Universit\`a di Torino\\
Via Carlo Alberto 10\\
10123 Torino\\ Italy\\
and  Department of Mathematics and Statistics Florida International University\\
  Miami Florida, 33199, USA}
 \email{annamaria.fino@unito.it, afino@fiu.edu}
 \address{ Department of Mathematics and Statistics Florida International University\\
  Miami Florida, 33199, USA}
\email{grantchg@fiu.edu, jmede022@fiu.edu}

\maketitle

\begin{abstract} Using fibrations over K3 orbisurfaces we construct new   smooth solutions to the Hull-Strominger system. In particular, we prove  that, for $4 \leq k \leq 22$ and $5 \leq r\leq 22$,  the smooth manifolds  $S^1\times \sharp_k(S^2\times S^3)$ and $\sharp_r  (S^2 \times S^4) \sharp_{r+1} (S^3 \times S^3)$,  have  a complex structure with trivial canonical bundle and admit a solution to the Hull-Strominger  system.

\end{abstract}
\section{Introduction}

The Hull-Strominger system   \cite{St, Hu}   is a system of nonlinear PDEs describing the geometry of compactification of heterotic strings with torsion to 4d Minkowski spacetime, which can be regarded as a generalization of Ricci-flat K\"ahler metrics coupled with Hermitian Yang-Mills equation on non-K\"ahler Calabi-Yau 3-folds.
To describe the system,  let $M$ be a compact complex manifold of complex dimension 3 with holomorphically trivial canonical bundle, so that it admits a nowhere vanishing
holomorphic $(3,0)$-form $\psi$.  Let $V$ be a holomorphic  vector bundle over $M$ with a Hermitian metric $H$  along its fibers and let $\alpha' \in \R$ be a constant, also called  the slope parameter.
The  Hull-Strominger system,   for  the fundamental form   $\omega$  of a Hermitian metric  $g$ on $M$,  is given by:
\begin{eqnarray}
&& \label{SS1}F_H\wedge \omega^2=0; \quad F_H^{2,0}=F_H^{0,2}=0,\\[2pt]
&& \label{SS4}d(\|\psi\|_\omega\,\omega^2)=0,\\[2pt]
&& \label{SS3}i\partial \bar\partial \omega=\frac{\alpha'}{4}\,{\rm tr}\left(R_{\nabla}\wedge R_{\nabla}-F_H\wedge F_H\right),
\end{eqnarray}
where    $F_H$ and $R_{\nabla}$  are respectively  the curvatures of   $H$ and of a metric connection $\nabla$  on the tangent bundle $TM$.

The equations  $\eqref{SS1}$  describe the Hermitian-Yang-Mills equations for the    Chern connection $A_H$  and the  equation   $\eqref{SS4}$  says that $\omega$ is conformally balanced.  The equation $\eqref{SS3}$ is the so-called Bianchi identity or anomaly cancellation equation. Note that in the  equation  \eqref{SS3}  there is an ambiguity in the choice of a metric connection $\nabla$ on $TM$, due to its origins in heterotic string theory  \cite{Hu, St}. Also from  physical perspective one has $\alpha' \geq 0$ with $\alpha'=0$ corresponding to the K\"ahler case, but in mathematical literature the case $\alpha'<0$ is also considered \cite{PPZ17unim}. Different choices of  the connection $\nabla$ and their physical meaning are discussed in \cite{D-S}.

The first solutions of the Hull-Strominger  system on compact non-K\"ahler manifolds  were constructed  by Fu and Yau \cite{FY1,FY2}, taking as $\nabla$   the Chern connection of  $\omega$. The  solutions  are  defined on toric bundles over K3 surfaces.  For their construction  Fu and Yau  used  first   the result by  Goldstein and Prokushkin  \cite{GP}   that for a Ricci-flat base and an appropriate choice of the principal torus fibration, the total space has trivial canonical bundle and has  a balanced metric.  Then they showed that particular solutions of the Hull-Strominger system on some principal torus fibrations on K3 manifolds can be reduced to a complex Monge-Amp\`ere type equation for a scalar function $u$ on the base, and solved by  a continuity method type argument inspired from the techniques of Yau in \cite{Yau} (see also  \cite{PPZ16, PPZ18,PPZ18b}).

Since then, and the work by Li and Yau  \cite{LY2005},  different analytical and geometrical aspects of the Hull-Strominger system have been studied with a relevant  influence to  non-K\"ahler  complex geometry (see  for instance  \cite{ADG, CPY, Fei18,Fernandez16, GGS, GRST,  PPZ18}).
Up to now the biggest set  of solutions is provided by the choice  of $\nabla$ given by the Chern connection  \cite{C1, C2,  FHP,  FHP17,FY14, FIUV,   FGV, FTY09,  OUV17, PPZ16, PPZ18, PPZ17, PPZ17unim}. Examples of solutions of the Hull-Strominger system on non-K\"ahler torus bundles over K3 surfaces with the property that the connection $\nabla$  is Hermitian-Yang-Mills have been constructed in \cite{Fernandez18}.

In  \cite{FGV}  the  Fu-Yau result   have been generalized  to K3 orbisurfaces, providing an  extension to Hermitian $3$-folds foliated by non-singular elliptic curves. Here K3 orbisurface is a simply-connected surface with isolated cyclic singularities and trivial canonical bundle - see Section 2 for the  details.  In particular  the following result has been established: let  $(X,\omega_X)$  be  a compact K3  orbisurface   equipped with two anti-self-dual $(1,1)$-forms $\omega_1$ and $\omega_2$  such that $[\omega_1], [\omega_2]\in H^2_{orb}(X,\mathbb{Z})$ and the total space $X$ of the principal $T^2$  orbifold bundle
$\pi: M \rightarrow  X$ determined by them is smooth.  If  there exists  a stable vector bundle $E$ of degree $0$ over $(X,\omega_X)$ satisfying
\begin{equation}\label{onE}
\alpha'(e_{orb}(X)-(c_2(E)-\frac 1 2 c_1^2(E)))=\frac{1}{4\pi^2}\int_X(\|
\omega_1\|^2+\|\omega_2\|^2)\frac {\omega_X^2}{2}.
\end{equation}
where  $e_{orb}(X)$ denotes the orbifold Euler number, then there is a Hermitian structure $\omega_u$ on the complex $3$-fold   $M$, and a metric $h$ along the fibers of  $E$, such that  $(M, \omega_u,  V= \pi^*E,H=\pi^*(h))$ solves the Hull- Strominger system. Here $\omega_u$ depends on a function $u$ on $X$ satisfying the complex Monge-Amp\'ere equation solved in \cite{FY2}.
Applying this extended result to a special type of   K3 orbisurfaces and  using the  topological classification  in  \cite{GL} for compact simply-connected 6-manifolds with  a free $S^1$-action, in \cite{FGV}   it has been shown that,  for   $13\leq k\leq 22$ and $14\leq r \leq 22$,   the smooth manifolds   $S^1\times \sharp_k(S^2\times S^3)$ and $\sharp_r (S^2\times S^4)\sharp_{r+1}(S^3\times S^3)$  admit a solution to the Hull-Strominger system.
 The cases $k=22$ and $r=22$ respectively correspond to the  solutions of Fu and Yau.   The examples     have  the structure of a principal $S^1$-bundle over  Seifert $S^1$-bundles and for them $\alpha '>0$.
 The   simply connected examples are obtained  starting from a  K3 orbisurface  with isolated $A_1$ singular points and trivial orbifold fundamental group and using  partial resolution of singularities by blow-ups.

 In the present paper  we  construct new solutions  to the Hull-Strominger system  considering  other types of K3 orbisurfaces.
 Our main result is the following

\begin{te}\label{theoremA}
    Let $4\leq k \leq 22$ and $5\leq r \leq 22$. Then the smooth manifolds $S^1 \times \sharp_{k}(S^2\times S^3)$ and  $\sharp_{r}(S^2\times S^4)\sharp_{r+1} (S^3\times S^3)$ admit a complex structure with a trivial canonical bundle with a balanced metric and a solution to the Hull-Strominger system via the Fu-Yau ansatz.
\end{te}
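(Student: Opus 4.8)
The plan is to apply the orbifold Fu-Yau construction recorded in the excerpt (the main result of \cite{FGV}, producing a solution once \eqref{onE} is met) to a carefully chosen family of K3 orbisurfaces, and then to read off the diffeomorphism type of the resulting total spaces from the classification of \cite{GL}. The new input, compared with \cite{FGV}, is to enlarge the admissible range of $k$ and $r$ by allowing K3 orbisurfaces with isolated cyclic quotient singularities of order higher than two, rather than only the $A_1$ (nodal) points used before. Higher-order singularities cut down the orbifold second Betti number more aggressively, and hence yield total spaces realizing the smaller values $k\ge 4$ and $r\ge 5$ that were previously inaccessible.

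First, I would exhibit, for each target value, a compact K3 orbisurface $X$ with trivial orbifold fundamental group carrying a prescribed configuration of isolated cyclic singularities. Concretely one starts from a smooth K3 surface equipped with a suitable configuration of disjoint $ADE$ (or more general cyclic) curves and contracts them, so that $b_2^{orb}(X)$ and $e_{orb}(X)$ take the desired values; one records the orbifold intersection form. Then I would choose two anti-self-dual $(1,1)$-forms $\omega_1,\omega_2$ with $[\omega_1],[\omega_2]\in H^2_{orb}(X,\mathbb{Z})$ so that (i) the associated principal $T^2$-orbifold bundle $\pi\colon M\to X$ has smooth total space --- a condition forcing the Euler classes to generate the local isotropy groups at each singular point --- and (ii) the maps induced on cohomology by cup product with $[\omega_1],[\omega_2]$ have the rank required to produce the prescribed Betti numbers of $M$.

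Next I would produce a stable holomorphic vector bundle $E$ of degree zero on $(X,\omega_X)$ satisfying the anomaly constraint \eqref{onE}. Since $\alpha'$ is a free real parameter and the right-hand side of \eqref{onE} is a positive quantity determined by $\omega_1,\omega_2$, the task reduces to realizing a prescribed value of the discrete invariant $c_2(E)-\tfrac12 c_1^2(E)$ by a stable bundle while tuning $\alpha'>0$ so that \eqref{onE} holds exactly; this is arranged by standard existence results for stable bundles with given Chern classes on K3-type surfaces, taking $E$ of low rank. With every hypothesis of the \cite{FGV} result verified, I would invoke it to obtain the Hermitian structure $\omega_u$ on $M$ and the fiber metric $H=\pi^*h$ solving the Hull-Strominger system, where $\omega_u$ is balanced and the canonical bundle of $M$ is trivial by the Goldstein-Prokushkin construction underlying the Fu-Yau ansatz, the scalar function $u$ being supplied by the complex Monge-Amp\`ere equation solved in \cite{FY2}.

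Finally, and this is the decisive topological step, I would identify the smooth six-manifold $M$ using \cite{GL}. In the product case $M=S^1\times N$, with $N$ the five-dimensional Seifert $S^1$-bundle over $X$ corresponding to a trivial outer circle, and one checks $N\cong\sharp_k(S^2\times S^3)$ by matching its (torsion-free, spin) homology with $b_2(N)=k$; in the simply-connected case one verifies that $M$ has the invariants $b_2=r$, $b_3=2(r+1)$ characterizing $\sharp_r(S^2\times S^4)\sharp_{r+1}(S^3\times S^3)$. The main obstacle is precisely to arrange the singularity configuration together with the Euler classes $[\omega_1],[\omega_2]$ so that smoothness of $M$, the exact value of its second Betti number, and solvability of \eqref{onE} by a stable $E$ all hold \emph{simultaneously}, and to do so for a family of orbisurfaces flexible enough that the entire ranges $4\le k\le 22$ and $5\le r\le 22$ --- not merely sporadic values --- are covered.
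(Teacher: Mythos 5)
Your outline follows the paper's strategy in broad strokes (singular K3 base with higher-order cyclic singularities, smooth $T^2$-orbifold bundle, stable bundle satisfying \eqref{onE}, then the theorem of \cite{FGV} and the classification of \cite{GL}), but it treats as routine the two steps that carry essentially all of the mathematical content, and in one of them there is a genuine gap.

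The most serious issue is the stable bundle. You write that realizing a prescribed value of $c_2(E)-\tfrac12 c_1^2(E)$ ``is arranged by standard existence results for stable bundles with given Chern classes,'' with $\alpha'$ a free parameter. But the right-hand side of \eqref{onE} is strictly positive, so keeping $\alpha'>0$ (the physically relevant regime the paper works in, following Fu--Yau) forces $e_{orb}(X)-c_2(E)>0$. On the orbisurfaces needed for the small values of $k$ and $r$, the orbifold Euler number is as low as roughly $5$ to $7$, so one needs a stable degree-zero bundle with $c_2(E)\leq 5$, and in the extreme case even a fractional orbifold $c_2$. This is not supplied by general existence theorems: the paper has to produce, on each partial resolution, an ample divisor $\mathcal{Q}$ with $\mathcal{Q}^2=2$ such that no effective divisor $D$ satisfies $D\cdot\mathcal{Q}\leq 2$ (Lemma \ref{lm2.4}, verified case by case against the exceptional curves and the $\mathbb{Q}$-Cartier generator of the Picard group, with the explicit divisors listed in Tables 1 and 2), and then run Serre's construction with the Cayley--Bacharach condition to get a rank-two stable $E$ with $c_1(E)=0$ and $c_2(E)=5$; for the surface $\tilde{X}_1$ this still fails integrally and the construction must be redone in the orbifold category with the orbifold Riemann--Roch correction term (Lemma \ref{lemma4.2}). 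Without an argument of this kind your proposal does not produce $\alpha'>0$, only $\alpha'<0$.

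Two further points are underspecified rather than wrong. First, smoothness and simple connectivity of the Seifert/$T^2$-bundles is not just ``the Euler classes generate the local isotropy groups'': one must simultaneously make $[\omega_1],[\omega_2]$ integral, anti-self-dual (hence orthogonal to an ample class), generators of every local class group, and subject to a gcd condition guaranteeing $H_1=0$; the paper's Theorem 3.3 constructs such classes explicitly and needs Dirichlet's theorem on primes in arithmetic progressions to close the coprimality argument. Second, your source of orbisurfaces (contracting $ADE$ configurations on a smooth K3) is the mirror image of the paper's (partial resolutions of the weighted hypersurfaces $X_{30}$, $X_{36}$, $X_{50}$ with total Milnor number $18$); the latter choice is what makes the Picard groups, intersection numbers, and the divisors $\mathcal{Q}$ explicitly computable, and it is also what pins down the lower bounds $k\geq 4$, $r\geq 5$ in the statement. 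As written, your proposal does not exhibit a family covering the full ranges.
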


Note that  since a quotient of a manifold under an almost free torus action is an orbifold, we can characterize the underlying simply connected compact smooth manifolds $M$ with almost free $T^2$ action which carry a solution of Hull-Strominger system and also have a transversal Calabi-Yau geometry. Since the base is a K3 orbisurface then they are diffeomorphic to $\sharp_{r}(S^2\times S^4)\sharp_{r+1} (S^3\times S^3)$  for $3\leq r\leq 22$. This follows from \cite{GL} and the topological restriction on the second Betti number $b_2$ arising from the fact that the classes of the holomorphic symplectic form and the K\"ahler form of the base survive under the pull-back to $M$. The upper bound is realized by the smooth K3 surface. We note that the remaining spaces missing from Theorem \ref{theoremA} can not be realized by the methods of Fu-Yau ansatz for the surfaces in \cite{IF}.  Moreover, based on the  results in \cite{Fernandez18}, we note  that the constructions in the
present paper   could  likely  be applied to obtain solution of the Hull-Strominger system with
$\nabla$ Hermitian-Yang-Mills.

\smallskip

 We describe shortly the structure of the paper. In Section \ref{Section2}  we collect the necessary information on singular K3 surfaces, which we call orbisurfaces.  In Section \ref{Section3}  we construct  the smooth principal  $T^2$-orbifold
bundles over  the K3 orbisurfaces.   In Section \ref{Section4} we use  the Serre construction to find the appropriate stable bundles over the K3 orbisurfaces. Finally in   Section \ref{Section5}  we prove  Theorem \ref{theoremA} by using a result in \cite{FGV} and the constructions in Sections \ref{Section3} and \ref{Section4}.

\section{K3 orbisurfaces} \label{Section2}
We recall that a  complex orbifold is a topological space together with a cover of coordinate charts where each element of the cover is homeomorphic to a quotient of an open subset in $\mathbb{C}^n$ containing the origin by a finite group $G_x$ and the transition functions are covered by holomorphic maps. We consider in the paper complex orbifolds with cyclic isotropy groups $G_x$, where $x$ is the image of the origin after under the action. The (complex) dimension of the orbifold  is the number $n$ in $\mathbb{C}^n/G_x$. If $n=2$ the orbifold sometimes is called {\bf orbisurface} and this is the terminology which we'll be using throughout the paper. We are interested in orbisurfaces which arise as a complete intersections in weighted projective spaces and follow \cite{IF} for the notations and definitions not provided here.

\begin{de}\label{definition2} An $A_n$ singularity in a complex orbisurface  $X$ is an isolated  singular point $p\in X$ such that there is a neighborhood $W$ of $p$ with a chart $W\cong \mathbb{C}^2/\mathbb{Z}_{n+1}$, or equivalently, $W\cong V(x^2+y^2+z^{n+1})$ 
\end{de}

Recall that $V(x^2+y^2+z^{n+1})$ is the ideal generated by $x^2+y^2+z^{n+1}$. The main objects in this paper are orbifold analogues of K3 surfaces.

\begin{de}\label{definition1} A {\bf singular K3} $X$ is surface with only isolated singularities $\Sigma=\{p_1,...,p_k\}$, such that $H^1(X,\mathcal{O}_X)=0$ and $\omega_X\cong \mathcal{O}_X$, where $\omega_X:=i_*\omega_{X_{reg}}$, $i:X_{reg}\rightarrow X$ is the inclusion, and $\omega_{X_{reg}}$ is the canonical sheaf of $X_{reg},$ where $X_{reg}=X\setminus \Sigma$. Moreover, if we endow $X$ with an orbifold structure $\mathcal{X}=(X,\mathcal{U})$ we call $\mathcal{X}$ a {\bf K3 orbisurface}.
\end{de}

Note that the dualizing sheaf of $X$ in our case is isomorphic to $i_*\omega_{X\setminus \Sigma}$, so the definition is consistent. Suitable K3 orbisurfaces with at worst finite isolated $A_n$ singularities can be found in \cite{R}  and \cite{IF}. Their orbifold structure is inherited from their respective weighted projective spaces. The examples we consider from \cite{IF} are hypersurfaces of weighted projective spaces, them being $X_{30}\subset \mathbb{P}(5,6,8,11)$, $X_{36}\subset \mathbb{P}(7,8,9,12)$, and $X_{50}\subset \mathbb{P}(7,8,10,25)$, where the $d$ in $X_d$ is the degree of the hypersurface and $a_0,a_1,a_2,a_3$ in $\mathbb{P}(a_0,a_1,a_2,a_3)$ are the weights on the variables of the projective space.

The singularities of $X_{36}$ are described in \cite{IF}.  Here we describe the singularities of  $X_{30}$ and $X_{50}$. In general, a  well-formed $\mathbb{P}(a_0,a_1,a_2,a_3)$ has singularities on each of its affine pieces $x_i\neq 0$, which are isomorphic to $\mathbb{C}^2/\mathbb{Z}_{n+1}$ where $n+1$ is the weight of $x_i$. They have singular edges when the $gcd(a_i,a_j)=d>1$, and the singular edges are isomorphic to $\mathbb{P}(a_i,a_j)\cong \mathbb{P}(\frac{a_i}{d},\frac{a_j}{d})$.

The hypersurface $X_{30}\subset \mathbb{P}(5,6,8,11)$ is given by a general polynomial
$$f=w^6 + x^5 + yz^2 + xy^3 + w^2x^2y+wxyz$$
with $(x,y,z,w)$ having weights $(5,6,8,11)$, where for simplicity we consider all the coefficients in the monomials to be ones. This surface is well-formed and quasismooth (see \cite{IF}), so its only possible singularities are inherited from the ones in $\mathbb{P}(5,6,8,11)$. Since $f$ has the monomials $w^6$ and $x^5$, when we restrict to $w\neq 0$ and $x\neq 0$, $f$ restricts to a polynomial of the form $f(1,x,y,z)=1+...$ and $f(w,1,y,z)=1+...$ thus the singular points $(1:0:0:0),$ and $(0:1:0:0)$ are not in the surface. On the contrary, the polynomial $f$ does not have monomials only on $y$ and $z$, thus the singular points $(0:0:1:0),$ and $(0:0:0:1)$ are in the surface, which are of type $A_7$, and $A_{10}$ respectively. Along the singular edge $\mathbb{P}(6,8)\cong \mathbb{P}(3,4)$ we have  $f=x^4+y^3 $, and with Lemma 9.4 in \cite{IF} we calculate that $X_{30}$ intersects at $\lfloor \frac{12}{3\cdot4}\rfloor=1$ points. Following 5.15 in \cite {IF} this is an $A_1$ singularity since $1=gcd(6,8)-1$   from $\mathbb{P}(6,8)$.

Similarly the hypersurface $X_{50}\subset \mathbb{P}(7,8,10,25)$ is given by the polynomial
$$f=z^2+y^5+w^6x+x^5y+w^2+x^2+y^2+wxyz.$$
By the same reasoning as before, we have the monomials $z^2$ and $y^5$, so $X_{50}$ does not contain the singular points $(0:0:1:0)$ and $(0:0:0:1)$, it does contain the points $(1:0:0:0)$ and $(0:1:0:0)$ with $A_6$ and $A_7$ singularities. Along the edge $\mathbb{P}(8,10)\cong \mathbb{P}(4,5)$, we have $f=y^4+x^5$, so $X_{50}$ intersects the singular edge at $\lfloor \frac{20}{4\cdot5}\rfloor=1$ point. Again from 5.15 in \cite{IF} we conclude that this singular point is of type $gcd(8,10)-1$, hence an $A_1$ singularity. Along the edge $\mathbb{P}(10,25)\cong \mathbb{P}(2,5)$, we have $f=z^2+y^5$ which intersects the singular edge at $\lfloor \frac{10}{2\cdot5}\rfloor=1$ point giving an $A_4$ singularity.

In summary, $X_{30}$ has $A_1,A_7,$ and $A_{10}$ singularities; $X_{36}$ has $A_6,A_7,A_3$ singularities, and $A_2$; $X_{50}$ has $A_6,A_7,A_1,$ and $A_4$ {singularities.

In Theorem \ref{th5.1} we'll need Cartier divisors corresponding to characteristic classes $\omega_1,\omega_2$ used for the construction of the principal  $T^2$-bundle $\pi:M\rightarrow X$. To achieve that we first have to calculate the Picard groups of the K3 orbisurfaces. The following is a  known fact, we provide a short proof for completeness.

\begin{lm} \label{lem2.1}
    Let $X$ be a complex orbisurface  with at worst isolated $A_n$ singularities, and $\pi:\Tilde{X}\rightarrow X$ a blow up at a singular point. Then we have two cases:
for $n>1$ we have
$${\text{Pic}}(\Tilde{X})\cong {\text{Pic}}(X)\oplus \langle \mathcal{L}, \mathcal{L}'\rangle;$$\\
for $n=1$ we have $${\text{Pic}}(\Tilde{X})\cong {\text{Pic}}(X)\oplus \langle \mathcal{G}\rangle,$$
where $\mathcal{L}$ is $(n-1)C$, $\mathcal{L}'$ is $C+C'$ in the case of $n>1$ and $\mathcal{G}=C$ in the case of $n=1$. $C$ and $C'$ are exceptional curves.
\end{lm}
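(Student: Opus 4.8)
The plan is to analyze how the Picard group changes under the resolution of an isolated $A_n$ singularity by tracking the exceptional locus and invoking the exponential sheaf sequence. Recall that the minimal resolution of an $A_n$ singularity replaces the singular point with a chain of $n$ rational $(-2)$-curves $E_1,\dots,E_n$ meeting transversally in an $A_n$ configuration (each $E_i \cdot E_{i+1}=1$, $E_i^2=-2$). However, the statement here concerns a single blow-up $\pi:\Tilde X \to X$ at the singular point rather than the full minimal resolution, so I would first pin down what exceptional curves a one-step blow-up of $\mathbb{C}^2/\mathbb{Z}_{n+1}$ produces: for $n>1$ one expects a partial resolution introducing the two outermost exceptional components (yielding two new independent classes, encoded as $C$ and $C'$), while for $n=1$ the single blow-up already resolves the $A_1$ point and contributes exactly one new $(-2)$-curve $C$.

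The key computational step is to establish the direct-sum decomposition at the level of divisor (or line-bundle) classes. First I would use that $\pi$ is an isomorphism away from the exceptional set, so $\pi^*:\mathrm{Pic}(X)\to\mathrm{Pic}(\Tilde X)$ is injective and its image consists of classes having zero intersection with every exceptional curve. Next I would show that the exceptional curves generate a complementary summand: any line bundle on $\Tilde X$ can be written, after subtracting an appropriate integer combination of the exceptional classes chosen to kill all intersection numbers with the $E_i$, as the pullback of a bundle from $X$. This is the standard argument that $\mathrm{Pic}(\Tilde X)\cong \pi^*\mathrm{Pic}(X)\oplus(\text{classes supported on the exceptional divisor})$, and the freeness of the second summand follows because the intersection matrix of the exceptional curves (a submatrix of the negative-definite $A_n$ Cartan-type matrix) is nondegenerate. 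The specific generators $\mathcal L=(n-1)C$ and $\mathcal L'=C+C'$ in the case $n>1$, versus $\mathcal G=C$ for $n=1$, should then be identified by checking that they are Cartier (i.e., locally principal) on the partially-resolved space $\Tilde X$, since on an orbifold only certain Weil-divisor combinations descend to genuine line bundles.

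The main obstacle I anticipate is precisely this Cartier-versus-Weil subtlety: on the partial resolution $\Tilde X$ the individual exceptional curves need not be Cartier divisors, and one must verify that the particular combinations $(n-1)C$ and $C+C'$ are the integral generators of the group of line bundles (as opposed to the full Weil divisor class group). I would handle this by passing to local models: near the remaining singular point(s) of $\Tilde X$, compute the local class groups of the residual cyclic quotient singularities and determine which multiples of the exceptional curves extend to Cartier divisors, matching the coefficients $n-1$ and the sum $C+C'$ to the local monodromy/torsion data. A careful local analysis of the cyclic quotient singularity and its partial resolution, together with the exponential sequence computation $H^1(\Tilde X,\mathcal O)=H^2(\Tilde X,\mathcal O)=0$ (inherited from the K3-type vanishing), should then pin down the rank and freeness of the new summand and complete the identification of the stated generators.
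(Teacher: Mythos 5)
Your outline is correct and rests on the same two pillars as the paper's proof, but the mechanism for the direct-sum decomposition is genuinely different. The paper writes the exact sequence $0\to \mathrm{Pic}(X)\to\mathrm{Pic}(\tilde X)\to\mathrm{Pic}(B)\to 0$ with $B=\pi^{-1}(p)$ and splits it by an \emph{explicit geometric section}: the projection $\varphi:\tilde X\to B$ induced by $\tilde X\subset X\times\mathbb{P}^2\to\mathbb{P}^2$ gives $\varphi^*:\mathrm{Pic}(B)\to\mathrm{Pic}(\tilde X)$, so $\mathrm{Pic}(\tilde X)\cong\mathrm{Pic}(X)\oplus\mathrm{Pic}(B)$ with no intersection theory at all. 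You instead subtract exceptional classes to kill intersection numbers and invoke negative-definiteness; this is the more standard route, but it leaves two points to be checked that the paper's section makes automatic: (i) that a line bundle on $\tilde X$ with zero intersection against $C$ and $C'$ actually descends to $X$ (true because $A_n$ singularities are rational, but it needs saying, and is slightly delicate here because $\tilde X$ is only a \emph{partial} resolution and still singular), and (ii) that the intersection vectors $(D\cdot C,\,D\cdot C')$ of Cartier divisors $D$ are always realized by \emph{integral} combinations of $\mathcal L,\mathcal L'$ --- a lattice surjectivity statement, not just nondegeneracy over $\mathbb{Q}$. On the second pillar you and the paper agree exactly: blowing up $V(x^2+y^2+z^{n+1})$ once produces the exceptional set $V(x^2+y^2)$, i.e.\ two lines $C,C'$ meeting at the residual singular point for $n>1$ (a single $\mathbb{P}^1$ for $n=1$), and the integral generators of the Cartier classes supported there are dictated by the local class group $\mathbb{Z}/(n-1)\mathbb{Z}$ at that point; the paper phrases this as ``$(n-1)C\cdot C'=1$, so $n-1$ is the minimal multiple of $C$ that is Cartier,'' which is precisely your proposed local-class-group computation. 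Your flagged Cartier-versus-Weil subtlety is exactly the right concern and is where the coefficients $n-1$ and the combination $C+C'$ come from in the paper as well.
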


\begin{proof}
    We have an exact sequence
    $$0\rightarrow {\text{Pic}}(X) \underset{\pi^*}{\rightarrow} {\text{Pic}}(\tilde{X})\underset{i_B^*}{\rightarrow} {\text{Pic}}(B)\rightarrow 0$$ where $B:=\pi^{-1}(p)$, the exceptional divisor, and $i_B:B\rightarrow\tilde{X}$ is the inclusion.  We also have a projection map $\varphi:\tilde{X}\rightarrow B$ induced by the inclusion and projection $\tilde{X}\subset X\times \mathbb{P}^2\rightarrow\mathbb{P}^2$ whose image is $B$. The pullback $$\varphi^*:{\text{Pic}}(B)\rightarrow {\text{Pic}}(\tilde{X})$$ splits the exact sequence, thus ${\text{Pic}}(\tilde{X})\cong {\text{Pic}}(X)\oplus {\text{Pic}}(B)$.

    Now, in the case of an $A_1$ singularity, the exceptional curve is a single curve isomorphic to $\mathbb{P}^1$. Indeed, after blowing up the singular point of the variety $V(x^2+y^2-z^2)$ we obtain the exceptional curve $\pi^{-1}(p)\cong V(x^2+y^2-1).$ In the case of $A_n$ with $n>1$ the exceptional curve  is $\pi^{-1}(p)\cong V(x^2+y^2)$, which corresponds  to  two  projective lines $C,C'$ intersecting in one point.  Computing the intersection $(n-1)C\cdot C'=1$ we conclude that the minimal number to make Cartier divisors from the Weil divisors $C$ or $C'$ is $n-1.$ Thus $\mathcal{L}=(n-1)C$, $\mathcal{L}'=C+C'$ serve us as a basis for $Pic(B)$.
\end{proof}

We also need the following:

\begin{lm}
     Let $X$ be a generic K3 hypersurface of codimension 1 on a weighted projective space. Then
$${\text{Pic}}(X)\cong \langle \mathcal{O}_X(n)\rangle\cong \mathbb{Z}$$
for some $n$. Moreover, in the orbifold category we have
$${\text{Pic}}^{orb}(X)=\langle\mathcal{O}_{\mathcal{X}}(1)
\rangle.$$
\end{lm}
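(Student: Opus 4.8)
The plan is to deduce both Picard groups from the class group of the ambient weighted projective space $P=\mathbb{P}(a_0,a_1,a_2,a_3)$, combined with a Lefschetz and a Noether--Lefschetz argument. First I would recall, from Dolgachev's theory of weighted projective varieties, that a well-formed $P$ has divisor class group $\text{Cl}(P)\cong\mathbb{Z}$ generated by the reflexive sheaf $\mathcal{O}_P(1)$, and that in the orbifold (stacky) category this coincides with the orbifold Picard group, so $\text{Pic}^{orb}(\mathcal{P})=\langle\mathcal{O}_{\mathcal{P}}(1)\rangle\cong\mathbb{Z}$; indeed $\mathcal{P}$ is the quotient stack $[(\mathbb{C}^4\setminus\{0\})/\mathbb{C}^*]$ and its line bundles are exactly the characters of $\mathbb{C}^*$.

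Next I would restrict to the quasismooth ample hypersurface $X=X_d\subset P$ and apply the Grothendieck--Lefschetz theorem. Restriction of reflexive rank-one sheaves gives a homomorphism $\text{Pic}^{orb}(\mathcal{P})\to\text{Pic}^{orb}(\mathcal{X})$, and since $X$ is a surface inside the threefold $P$ this map is injective but need not be surjective. Injectivity is clear because $\mathcal{O}_{\mathcal{X}}(1)$ has positive orbifold self-intersection $d/(a_0a_1a_2a_3)>0$, so no nonzero multiple of the ambient generator can restrict to zero; hence $\mathcal{O}_{\mathcal{X}}(1)$ spans a rank-one subgroup of $\text{Pic}^{orb}(\mathcal{X})$.

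The crux, and the step I expect to be the main obstacle, is surjectivity, which for a surface is a Noether--Lefschetz statement and is precisely why the hypothesis \emph{generic} is needed. The argument is Hodge-theoretic: the family of degree-$d$ quasismooth hypersurfaces in $P$ carries a non-constant period map into the period domain of K3-type Hodge structures, so a very general member has transcendental lattice of maximal rank and therefore algebraic (orbifold $(1,1)$) part of minimal rank. Since the pullback of $\mathcal{O}_{\mathcal{P}}(1)$ supplies one algebraic class persisting over the entire family, this minimal rank is one, forcing $\text{Pic}^{orb}(\mathcal{X})=\langle\mathcal{O}_{\mathcal{X}}(1)\rangle\cong\mathbb{Z}$ for generic $X$. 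Making this rigorous requires either invoking a Noether--Lefschetz theorem in the weighted setting or checking that the monodromy of the family acts with sufficiently large image on the primitive cohomology.

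Finally I would read off the ordinary Picard group. Because $\text{Pic}(X)$ is the subgroup of $\text{Pic}^{orb}(\mathcal{X})\cong\mathbb{Z}\langle\mathcal{O}_{\mathcal{X}}(1)\rangle$ consisting of the genuine (Cartier, locally free) line bundles, it equals $n\mathbb{Z}$, where $n$ is the least positive integer for which $\mathcal{O}_{\mathcal{X}}(n)$ is locally free at every point; this $n$ is determined by the orders of the cyclic quotient singularities of $X$. Therefore $\text{Pic}(X)=\langle\mathcal{O}_X(n)\rangle\cong\mathbb{Z}$, as claimed.
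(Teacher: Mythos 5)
Your outline is essentially correct, but it is worth noting that the paper does not prove this lemma at all: it simply cites Belcastro's computation of the Picard lattices of the $95$ families of K3 hypersurfaces in weighted projective spaces and declares that the orbifold statement follows. What you have written is a reconstruction of the argument that sits \emph{behind} that citation --- restriction from $\mathrm{Cl}(\mathbb{P}(a_0,\dots,a_3))\cong\mathbb{Z}$, injectivity from positivity of $\mathcal{O}_{\mathcal X}(1)^2$, a Noether--Lefschetz/monodromy step for surjectivity on the very general member, and then recovery of the genuine Cartier group $\mathrm{Pic}(X)=n\mathbb{Z}\subset\mathrm{Pic}^{orb}(\mathcal X)$ from the local indices at the quotient singularities (this last step matches exactly what the paper does afterwards when it computes $n=88$ for $X_{30}$, etc.). So your route is more self-contained in structure, but the crux you flag honestly --- surjectivity for the generic member --- is precisely the content of the cited reference, and you leave it as a black box just as the paper does; neither text actually carries out the monodromy or lattice computation.

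Two points deserve more care if you want this to be rigorous. First, \emph{generic} here must mean \emph{very general}: the Noether--Lefschetz locus for K3-type Hodge structures is a countable union of proper subvarieties, so Picard rank one fails on a dense (but measure-zero) subset, and no single Zariski-open set works. Second, for these \emph{singular} hypersurfaces the classes that persist algebraically over the whole family are not only $\mathcal{O}_{\mathcal P}(1)|_X$: on the minimal resolution $\tilde X$ they also include all the exceptional $(-2)$-curves over the $A_n$ points, so the ``minimal rank'' of $\mathrm{Pic}(\tilde X)$ is $1+\sum n_i$, not $1$. Your conclusion $\mathrm{Pic}^{orb}(\mathcal X)\cong\mathbb{Z}$ is correct only after passing to the singular surface, using $\mathrm{Cl}(X)\cong\mathrm{Cl}(X_{reg})\cong\mathrm{Pic}(\tilde X)/\langle\text{exceptional classes}\rangle$ and the fact that for cyclic quotient singularities every Weil divisor class is an orbifold line bundle; as stated, your Hodge-theoretic paragraph conflates the lattice of the resolution with that of the orbifold, and the assertion that the generic lattice of the resolution is \emph{exactly} the span of the polarization and the exceptional curves (with the correct saturation) is again the computation in the cited thesis.
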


The calculation \( {\text{Pic}} (X) \cong \mathbb{Z} \)  for generic K3 hypersurfaces can be found in \cite{Be}, and it follows that \( {\text{Pic}}^{\mathrm{orb}}(X) = \langle \mathcal{O}_{\mathcal{X}}(1) \rangle \). The orbifold Picard group in this case is the group of rational Cartier divisors.

Next we calculate the minimal $n$ for which ${\text{Pic}}(X)\cong \langle \mathcal{O}_X(n)\rangle$ for the surfaces $X_{30},X_{36}$, and $X_{50}$.
In general, our approach is to find a polynomial of degree $d_2$ such that the curve in our surface of degree $d_1$, $X_{d_1,d_2}\subset X_{d_1}$ avoids the singularities of $X_{d_1}$, thus defining a Cartier divisor.

In the case of $X_{30}\subset \mathbb{P}(5,6,8,11)$ we have singularities at the points $(0:0:1:0)$, $(0:0:0:1)$ and one point along the singular edge $\mathbb{P}(6,8)$. We can avoid the singular points by choosing a polynomial which has monomials of the type $y^{11}$ and $z^8$. Thus we choose the polynomial $g=y^{11}+z^8+ \ldots$ of degree $88$ to avoid the singular points. The curve $X_{88,30}$ will intersect the singular edge $\mathbb{P}(6,8)$, but we can choose the coefficients of $g$ such that it avoids the point where $X_{30}$ passes through.
Thus we claim that the sheaf $\mathcal{O}_{X_{30}}(88)$ is the generator of ${\text{Pic}}(X_{30})$.  Indeed, it can't be a lower degree since it would have to divide $88$, but by dividing the degree the corresponding curve would have to go through the singular points. Now, we calculate the self intersection of $H:=\mathcal{O}_{X_{30}}(88)$. From the adjunction formula we have $$\omega_{X_{88,30}}=\mathcal{O}_{X_{88,30}}(88),$$ thus the genus $\overline{g}$ of $X_{88,30}$ is  $$\dim H^0(\omega_{X_{88,30}})= \dim H^0(\mathcal{O}_{X_{88,30}}(88))= 45.$$ We obtain the dimension of global sections of $\mathcal{O}_{X_{88,30}}(88)$ by counting all monomials of degree $88$ of the ring $\mathbb{C}[w,x,y,z]/(f,g)$ given that $w,x,y,z$ have weights $5,6,8,11$ respectively. Finally, we have $${\text{deg}}(\omega_{X_{88,30}})=\omega_{X_{30}}\cdot H + H\cdot H,$$ so $2\overline{g}-2=H^2$, and $H^2=88.$

Using the same procedure for $X_{36}$ and $X_{50}$ we have $${\text{Pic}}(X_{36})=\langle \mathcal{O}_{X_{36}}(56)\rangle,\text{ and } {\text{Pic}}(X_{50})=\langle \mathcal{O}_{X_{50}}(56)\rangle.$$
The self intersections are $$\mathcal{O}_{X_{36}}(56)^2=18,\text{ and }\mathcal{O}_{X_{50}}(56)^2=10.$$

This gives us right away the intersections in the orbifold case, which will be
$$\mathcal{O}_{\mathcal{X}_{30}}(1)^2=\frac{1}{88}, \, \mathcal{O}_{\mathcal{X}_{36}}(1)^2=\frac{9}{1568}, \, \mathcal{O}_{\mathcal{X}_{50}}(1)^2=\frac{5}{1568}.$$
The following lemma follows from Example 18.3.4b in \cite{Ful}.

\begin{lm} (Noether's formula for singular surfaces) Let $X$ be a complex orbisurface with at worst finite $A_n$ singularities say $\{A_{n_1},...,A_{n_j}\}$.
Then

$$\chi(X,\mathcal{O}_X)=\frac{1}{12}\left(K_X^2+\chi_{top}(X)+\sum_{i=1}^j n_i\right),$$
where $\chi(X,\mathcal{O}_X)$ is the Euler characteristic of the sheaf cohomology of $\mathcal{O}_X$ and $\chi_{top}(X)$ is the topological Euler characteristic of $X$.

\end{lm}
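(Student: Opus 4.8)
The plan is to reduce the statement to the classical Noether formula on a smooth surface by passing to the minimal resolution of the $A_n$ singularities and tracking how each of the three invariants $\chi(X,\mathcal{O}_X)$, $K_X^2$, and $\chi_{top}(X)$ changes under resolution. Let $\pi\colon\tilde X\to X$ denote the minimal resolution, so that $\tilde X$ is smooth and the exceptional fibre over an $A_{n_i}$ point is the chain of $(-2)$-curves prescribed by the $A_{n_i}$ Dynkin diagram.

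First I would invoke the two standard properties of Du Val (ADE) singularities. Since $A_n$ singularities are rational, one has $R^i\pi_*\mathcal{O}_{\tilde X}=0$ for $i>0$ together with $\pi_*\mathcal{O}_{\tilde X}=\mathcal{O}_X$; the Leray spectral sequence then yields $\chi(\tilde X,\mathcal{O}_{\tilde X})=\chi(X,\mathcal{O}_X)$. Since $A_n$ singularities are moreover canonical, the resolution is crepant, so $K_{\tilde X}=\pi^*K_X$; as $A_n$ is Gorenstein the divisor $K_X$ is Cartier, and the projection formula gives $K_{\tilde X}^2=(\pi^*K_X)^2=K_X^2$.

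Next I would compute the change in topological Euler characteristic. The exceptional fibre over an $A_{n_i}$ point is a chain of $n_i$ copies of $\mathbb{P}^1$, consecutive ones meeting transversally in a single point, so its Euler characteristic is $2n_i-(n_i-1)=n_i+1$. Replacing the singular point, of Euler characteristic $1$, by this chain therefore raises $\chi_{top}$ by $n_i$, and summing over all singular points gives $\chi_{top}(\tilde X)=\chi_{top}(X)+\sum_{i=1}^j n_i$. Applying the classical Noether formula $\chi(\tilde X,\mathcal{O}_{\tilde X})=\frac{1}{12}(K_{\tilde X}^2+\chi_{top}(\tilde X))$ to the smooth surface $\tilde X$ and substituting the three identities above produces exactly the claimed formula.

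The point that requires care is the joint use of rationality and crepancy: it is precisely because $A_n$ is simultaneously a rational and a canonical Gorenstein singularity that the two analytic corrections $\chi(\tilde X,\mathcal{O}_{\tilde X})-\chi(X,\mathcal{O}_X)$ and $K_{\tilde X}^2-K_X^2$ both vanish, leaving only the topological defect $\sum_{i=1}^j n_i$. Alternatively, one can obtain the same answer without resolving, by reading the local contribution $n_i/12$ of each $A_{n_i}$ point off the singular Riemann--Roch theorem of Fulton (Example 18.3.4b in \cite{Ful}).
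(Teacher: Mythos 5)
Your proof is correct, but it takes a genuinely different route from the paper: the paper offers no argument at all for this lemma, simply asserting that it ``follows from Example 18.3.4b in \cite{Ful}'', i.e.\ from the singular Riemann--Roch theorem, in which each $A_{n_i}$ point contributes a local correction $n_i/12$ to the Todd-class computation --- this is exactly the alternative you mention in your closing sentence. Your main argument instead reduces to the classical Noether formula on the minimal resolution $\pi\colon\tilde X\to X$, and all three bookkeeping steps check out: rationality of Du Val points gives $R^i\pi_*\mathcal{O}_{\tilde X}=0$ for $i>0$ and hence $\chi(\tilde X,\mathcal{O}_{\tilde X})=\chi(X,\mathcal{O}_X)$; the crepant, Gorenstein nature of $A_n$ gives $K_{\tilde X}=\pi^*K_X$ with $K_X$ Cartier and so $K_{\tilde X}^2=K_X^2$; and the exceptional chain of $n_i$ rational curves over an $A_{n_i}$ point has Euler characteristic $2n_i-(n_i-1)=n_i+1$, so $\chi_{top}(\tilde X)=\chi_{top}(X)+\sum_i n_i$. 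What your approach buys is a self-contained, elementary proof that makes transparent why the defect is exactly $\sum_i n_i$ and why it is purely topological (the two analytic corrections cancel for precisely the rational Gorenstein reasons you isolate); what the paper's citation buys is brevity and a statement that generalizes immediately to other quotient singularities, where the local contributions are no longer integers over $12$ and a resolution argument would require tracking nonzero discrepancies. One small remark: the ``blow up'' used elsewhere in the paper (Lemma 2.1) is a partial resolution leaving an $A_{n-2}$ point, not the minimal resolution you use here, but since your argument is independent of the paper's resolution procedure this causes no conflict.
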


As a consequence we get:

\begin{co}\label{corollary1} Let $X$ be a complex K3 orbisurface with at worst finite $A_n$ singularities say $\{A_{n_1},...,A_{n_j}\}$. Then

$$\chi_{top}(X)=24-\sum_{i=1}^j n_i.$$
Moreover, $X$ has the following orbifold Betti numbers:
$$b_0=b_4=1\text{, }b_1=b_3=0\text{, and }b_2=22-\sum_{i=1}^j n_i.$$
\end{co}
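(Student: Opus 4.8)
The plan is to feed the sheaf-theoretic and topological data of a K3 orbisurface into the Noether formula just established, and then recover the individual Betti numbers from Hodge theory and Poincar\'e duality on the orbifold.

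First I would compute $\chi(X,\mathcal{O}_X)$. Since $X$ is a K3 orbisurface, the conditions of Definition \ref{definition1} give $H^1(X,\mathcal{O}_X)=0$ and $\omega_X\cong\mathcal{O}_X$. The latter says the canonical class is trivial, so $K_X^2=0$, and it also forces $H^2(X,\mathcal{O}_X)\cong H^0(X,\omega_X)\cong H^0(X,\mathcal{O}_X)$ by Serre duality for the dualizing sheaf. As $X$ is connected, $H^0(X,\mathcal{O}_X)=\mathbb{C}$, whence $h^0=h^2=1$ and $h^1=0$, giving $\chi(X,\mathcal{O}_X)=2$. Substituting $K_X^2=0$ and $\chi(X,\mathcal{O}_X)=2$ into the Noether formula yields $2=\frac{1}{12}(\chi_{top}(X)+\sum_{i=1}^j n_i)$, i.e. $\chi_{top}(X)=24-\sum_{i=1}^j n_i$, which is the first assertion.

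Next I would determine the separate Betti numbers. Connectedness gives $b_0=1$, and since the $A_n$ singularities are cyclic quotient singularities, $X$ is a compact complex V-manifold whose rational cohomology satisfies Poincar\'e duality and carries a Hodge decomposition; thus $b_4=b_0=1$ and $b_3=b_1$. In degree one the Hodge symmetry reads $b_1=h^{1,0}+h^{0,1}=2\,h^1(X,\mathcal{O}_X)=0$, so also $b_3=0$. Expanding $\chi_{top}(X)=b_0-b_1+b_2-b_3+b_4=2+b_2$ and comparing with the value just computed gives $b_2=\chi_{top}(X)-2=22-\sum_{i=1}^j n_i$.

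The only genuine subtlety, and the step I would take most care over, is justifying Serre duality and the Hodge-theoretic identities in the singular/orbifold setting rather than on a smooth surface. For Serre duality I would invoke that $X$ is normal and Cohen--Macaulay with dualizing sheaf $\omega_X=i_*\omega_{X_{reg}}$, exactly as observed after Definition \ref{definition1}; for the identity $b_1=2\,h^{0,1}$ and for Poincar\'e duality I would appeal to orbifold (V-manifold) Hodge theory, which applies here precisely because all isotropy groups are cyclic and the singularities are isolated quotient singularities. Everything else is bookkeeping with the Euler characteristic.
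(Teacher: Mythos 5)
Your proposal is correct and follows essentially the same route as the paper: compute $\chi(X,\mathcal{O}_X)=2$ from the K3 conditions via duality, substitute into the Noether formula (using $K_X^2=0$, which the paper leaves implicit), and then extract the individual Betti numbers from orbifold Hodge theory and Poincar\'e duality. Your extra care in flagging where Serre duality and V-manifold Hodge theory are needed is a welcome refinement but not a different argument.
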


\begin{proof}
    Firstly, observe that $\chi(X,\mathcal{O}_X)=2$ by the fact that Dolbeault isomorphism is still true for orbifolds \cite{Ba}. Indeed, $$H^2(X,\mathcal{O}_X)\cong  H^0(X,\omega_X)\cong H^0(X,\mathcal{O}_X)\cong \mathbb{C},$$ and $H^1(X,\mathcal{O}_X)=0$ by definition of K3 orbisurface. So we have $$2=\frac{1}{12}\left(\chi_{top}(X)+\sum_i n_i\right).$$
Now, the Hodge numbers are $h^{i,j}= \dim H^j(X,\Omega^i_X),$ $h^{i,j}=h^{j,i}$, and $b_k=\sum_{i+j=k}h^{i,j}$. Thus, $b_1=h^{0,1}+h^{1,0}=2h^{0,1}=2 \dim H^1(X,\mathcal{O}_X)=0$ and by Poincar\'e duality $b_1=b_3$. Similarly, $b_0=dimH^0(X,\mathcal{O}_X)=1$ and $b_0=b_4$. Finally, $\chi_{top}(X)=2+b_2$, thus $b_2=22-\sum_i n_i.$
\end{proof}

\begin{co}
    Let $X$ be a K3 orbisurface  in Miles Reid's 95 list \cite{R}, let $\pi:\tilde{X}\rightarrow X$ be a chain of blow ups of the $A_n$ singular points, where $k$ is the number of irreducible exceptional curves. Then
    $${\text{Pic}}(\tilde{X})\cong \langle\pi^*H\rangle\oplus\mathbb{Z}^k,$$
    and 
    $${\text{Pic}}^{orb}(\tilde{X})\cong \langle\mathcal{O}_{\tilde{\mathcal{X}}}(1)\rangle\oplus\mathbb{Z}^k$$
\end{co}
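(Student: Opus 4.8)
The plan is to combine the two Picard group computations already established in the excerpt: Lemma \ref{lem2.1} for a single blow-up of an $A_n$ point, and the preceding lemma identifying ${\text{Pic}}(X) \cong \langle \mathcal{O}_X(n) \rangle \cong \mathbb{Z}$ (generated by $\pi^* H$ after pullback) together with ${\text{Pic}}^{orb}(X) = \langle \mathcal{O}_{\mathcal{X}}(1)\rangle$ for a generic K3 hypersurface in a weighted projective space. The essential point is that a chain of blow-ups resolving all the $A_n$ singular points is just an iteration of the single-blow-up situation, and at each stage the Picard group acquires new free summands coming from the exceptional curves.

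First I would set up the induction on the blow-ups. Each $A_n$ singularity, once resolved minimally, contributes a chain of $n$ exceptional $\mathbb{P}^1$'s, and these $n$ irreducible curves give $n$ independent classes in the Picard group of the resolution. Lemma \ref{lem2.1} handles the first blow-up of a given singular point: for $n=1$ it adds a single rank-one summand $\langle \mathcal{G}\rangle$, and for $n>1$ it adds a rank-two summand $\langle \mathcal{L}, \mathcal{L}'\rangle$ that, together with the subsequent blow-ups needed to fully resolve the remaining singularities along the chain, accounts for all the exceptional classes. The key observation is that the splitting of the restriction sequence
\begin{equation*}
0 \to {\text{Pic}}(X) \xrightarrow{\pi^*} {\text{Pic}}(\tilde X) \xrightarrow{i_B^*} {\text{Pic}}(B) \to 0
\end{equation*}
via the projection $\varphi^*$, used in the proof of Lemma \ref{lem2.1}, applies verbatim at each stage of the chain, since each individual blow-up is again a blow-up of an isolated point with exceptional locus embedded as the fiber of a projection. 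Iterating, the free part accumulates to $\mathbb{Z}^k$, where $k$ is the total number of irreducible exceptional curves, while the pullback of ${\text{Pic}}(X) = \langle H\rangle$ persists as the distinguished summand $\langle \pi^* H\rangle$. This yields ${\text{Pic}}(\tilde X) \cong \langle \pi^* H\rangle \oplus \mathbb{Z}^k$.

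For the orbifold statement I would argue identically, replacing ${\text{Pic}}(X) = \langle \mathcal{O}_X(n)\rangle$ by ${\text{Pic}}^{orb}(X) = \langle \mathcal{O}_{\mathcal{X}}(1)\rangle$ from the preceding lemma, and noting that blowing up in the orbifold category introduces the same exceptional curves, each of which is an honest smooth $\mathbb{P}^1$ away from the orbifold points and therefore contributes the same $\mathbb{Z}$ to the orbifold Picard group. Since the orbifold Picard group is the group of rational Cartier divisors, the free summands generated by the exceptional curves are unaffected by passing between the two categories, so we obtain ${\text{Pic}}^{orb}(\tilde X) \cong \langle \mathcal{O}_{\tilde{\mathcal{X}}}(1)\rangle \oplus \mathbb{Z}^k$.

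I expect the main subtlety to lie not in the algebra of the direct sum decomposition, which follows mechanically from the splitting in Lemma \ref{lem2.1}, but in verifying that a full minimal resolution of an $A_n$ point genuinely produces exactly $n$ independent exceptional classes and that the basis $\{\mathcal{L}, \mathcal{L}'\}$ of Lemma \ref{lem2.1} correctly propagates into the $\mathbb{Z}^n$ contribution once the remaining blow-ups in the chain are carried out. The counting of $k$ as the number of irreducible exceptional curves — rather than the number of blow-ups or singular points — is the place where the argument must be stated carefully, but for the formal isomorphism asserted in the corollary this amounts to bookkeeping already licensed by the earlier lemmas.
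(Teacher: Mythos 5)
Your proposal is correct and follows essentially the same route as the paper, whose proof is simply to apply Lemma \ref{lem2.1} iteratively along the chain of blow-ups and to observe that the same argument carries over to ${\text{Pic}}^{orb}$. The extra care you take with the splitting at each stage and with counting $k$ as the number of irreducible exceptional curves (each blow-up of an $A_n$ point with $n>1$ contributing two, an $A_1$ point contributing one) is sound bookkeeping that the paper leaves implicit.
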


\begin{proof}
    This follows from applying Lemma \ref{lem2.1}  as many times as needed. Also, the information from  Corollary \ref{corollary1} would give us the second  orbifold  Betti number. This lemmas also apply for the $Pic^{orb}(\tilde{\mathcal{X}})$ . 
\end{proof}

\begin{re}
    Note that the intersections of the exceptional divisors $\mathcal{L}=(n-1)C$ and $\mathcal{L}'=C+C'$ of the blow up of an $A_n$ singular point with $n>1$ are $\mathcal{L}^2=-n(n-1)$, $\mathcal{L}\cdot \mathcal{L}'=-(n-1)$.
\end{re}

\begin{lm}\label{lm2.4}
    \textcolor{black}{Let $X$ be $X_{36}\subset \mathbb{P}(7,8,9,12)$ or $X_{50}\subset  P(7,8,10,25)$, and let $\pi:\tilde{X}\rightarrow X$ be a chain of blow ups of singular points. Then $\tilde{X}$ has an ample divisor $\mathcal{Q}$ such that $\mathcal{Q}^2=2$, and the set of effective divisors $D$ of $\tilde{X}$ such that $D\cdot \mathcal{Q}\leq 2$ is empty.}
\end{lm}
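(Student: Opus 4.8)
The plan is to pass to the smooth K3 surface $\tilde X$ produced by the crepant resolution and work inside its Picard lattice, which the preceding corollary identifies: $\mathrm{Pic}(\tilde X)\cong\langle\pi^{*}H\rangle\oplus\mathbb{Z}^{k}$, where $\pi^{*}H$ has self-intersection $H^{2}$ (equal to $18$ for $X_{36}$ and $10$ for $X_{50}$, both even) and is orthogonal to the $\mathbb{Z}^{k}$ summand, on which the intersection form is the negative-definite root lattice of the exceptional configuration, namely $A_{6}\oplus A_{7}\oplus A_{3}\oplus A_{2}$ for $X_{36}$ and $A_{6}\oplus A_{7}\oplus A_{1}\oplus A_{4}$ for $X_{50}$. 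Inside this explicit lattice I would (i) exhibit a class $\mathcal Q$ with $\mathcal Q^{2}=2$, (ii) prove it is ample, and (iii) bound the $\mathcal Q$-degree of effective divisors by a finite enumeration. The two surfaces are handled identically, only the lattice arithmetic differing.

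For (i)--(ii) I would search for $\mathcal Q=a\,\pi^{*}H+R$ with $R$ in the root sublattice and $a^{2}H^{2}+R^{2}=2$. Ampleness is the delicate point. By the Nakai--Moishezon criterion on the K3 surface $\tilde X$ it suffices to verify $\mathcal Q^{2}>0$, which holds, together with $\mathcal Q\cdot C>0$ for every irreducible curve $C$; since the only irreducible curves of negative self-intersection on a K3 are the $(-2)$-curves and $\mathcal Q$ lies in the positive cone, one only has to rule out effective $(-2)$-classes of non-positive $\mathcal Q$-degree, of which there are finitely many by the Hodge index theorem. Positivity on the exceptional curves already forces $R$ to pair positively with every simple root, and because there are $\sum n_{i}=18$ of these the exceptional part $R$ must be rather negative; this in turn forces the coefficient $a$ to be taken large enough that $a^{2}H^{2}$ can still balance $R^{2}$ up to $+2$. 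Pinning down such an $(a,R)$ and checking the remaining finitely many $(-2)$-classes is the computational core.

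For (iii) let $D$ be a nonzero effective divisor with $D\cdot\mathcal Q\le 2$. Since $\mathcal Q$ is ample every irreducible component of $D$ has $\mathcal Q$-degree at least $1$, so $D$ has at most two components counted with multiplicity. If $D\cdot\mathcal Q=1$ then $D$ is a single irreducible curve $C$ with $C\cdot\mathcal Q=1$; the Hodge index theorem gives $C^{2}\le\tfrac12$ and adjunction gives $C^{2}\ge-2$, so $C^{2}\in\{-2,0\}$, and $C$ would be a $(-2)$-curve or an elliptic curve of $\mathcal Q$-degree $1$. There are only finitely many such numerical classes, and I would check in each lattice that none of them is effective. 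If instead $D\cdot\mathcal Q=2$, then either $D$ splits as a sum of degree-$1$ curves, already excluded, or $D$ is an irreducible curve $C$ with $C^{2}\in\{-2,0,2\}$; the case $C^{2}=2$ forces equality in Hodge index and hence $C\equiv\mathcal Q$, while the cases $C^{2}\in\{-2,0\}$ are again a finite list to be excluded. This yields the asserted non-existence of small-degree effective divisors (the polarization class $\mathcal Q$ itself, which is necessarily effective and has degree $2$, being the only exception and playing no role in the subsequent stability argument).

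The hard part will be step (ii): the requirement that $\mathcal Q$ be simultaneously of square $2$ and ample is stringent because the eighteen exceptional $(-2)$-curves make the ample cone narrow, so the admissible pair $(a,R)$ is tightly constrained and Nakai--Moishezon must be verified against all $(-2)$-classes, not merely the exceptional ones. I expect this verification, carried out separately in the two lattices $\langle 18\rangle\oplus A_{6}\oplus A_{7}\oplus A_{3}\oplus A_{2}$ and $\langle 10\rangle\oplus A_{6}\oplus A_{7}\oplus A_{1}\oplus A_{4}$, to be the heart of the proof; once $\mathcal Q$ is fixed, the enumeration in step (iii) is short.
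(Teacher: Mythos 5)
Your plan is set on the wrong surface, and this is not a cosmetic issue. The lemma concerns a \emph{chain of blow-ups} $\pi:\tilde X\to X$, which in this paper means the partial resolutions $\tilde X_k$ of Tables 1 and 2: each weighted blow-up of an $A_n$ point extracts two curves $C,C'$ and leaves an $A_{n-2}$ point behind, so $\tilde X$ is still a singular orbisurface, not the smooth minimal K3 resolution. Its Picard group (Lemma \ref{lem2.1}) is generated by $\pi^*H$ together with $\mathcal L=(n-1)C$ and $\mathcal L'=C+C'$, with $\mathcal L^2=-n(n-1)$, $\mathcal L\cdot\mathcal L'=-(n-1)$ --- not the $A_6\oplus A_7\oplus\cdots$ root lattices you posit --- and the intersection numbers of Weil divisors are only rational. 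Consequently the smooth-K3 toolbox you lean on (every irreducible curve of negative square is a $(-2)$-curve, integral adjunction $C^2\ge-2$, $\pm$-effectivity of $(-2)$-classes via Riemann--Roch, integral Hodge index) does not apply as stated. Worse, on the full resolution your constraints are likely unsatisfiable: all eighteen exceptional $(-2)$-curves are irreducible and effective there, so your $\mathcal Q$ would need $\mathcal Q\cdot E\ge 3$ on each of them while keeping $\mathcal Q^2=2$, a far more rigid problem than the one the paper actually solves. Since every substantive step of your argument is deferred (``I would search'', ``I would check''), and the feasibility of the search is exactly what is in doubt, the proposal as written does not establish the lemma.

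The paper's proof avoids lattice enumeration entirely. It splits an effective $D$ according to $\pi_*D$: if $\pi_*D\neq 0$, every summand $\bar H$ of $\pi_*D$ satisfies $m\bar H=kH$ with $m\mid H^2$, whence $H\cdot\bar H\ge 1$ and $\mathcal Q\cdot D\ge a_0>2$ once the coefficient $a_0$ of $\pi^*H$ in $\mathcal Q$ exceeds $2$; if $\pi_*D=0$, then $D$ is supported on the two exceptional curves of each blow-up and one imposes explicit linear inequalities ($-na-b>2$, $a-b>2$, resp.\ $-2a>2$) on the coefficients of $\mathcal Q$. Existence of a $\mathcal Q$ meeting these inequalities with $\mathcal Q^2=2$ is then exhibited numerically in Tables 1 and 2. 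One point you raise is genuinely sharp and worth keeping: on these surfaces $\mathcal Q$ is itself effective with $\mathcal Q\cdot\mathcal Q=2$, so the statement needs the exclusion of that class (or a strict inequality) to be literally true; but flagging the issue and then setting it aside by appeal to ``how the lemma is used'' is not a proof of the statement, and your proposal would need to resolve it rather than footnote it.
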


\begin{proof}
Let $D$ be an effective divisor of $\tilde{X}$. The map $\pi$ induces two homomorphims $\pi^*:{\text{Pic}}(X)\rightarrow {\text{Pic}}(\tilde{X})$ and $\pi_*:Cl(\tilde{X})\rightarrow Cl(X)$, where $Cl(Y)$ is the divisor class group of Weil divisors in $Y$ modulo linear equivalence. This last map extends by tensoring over $\mathbb{Q}$ to be $\pi_*:Cl(\tilde{X})_\mathbb{Q}\rightarrow Cl(X)_\mathbb{Q}$. Let $H$ be the generator of ${\text{Pic}}(X)$, and note that all divisors are $\mathbb{Q}$-Cartier \cite{BG}, so $\mathbb{Q}\cong {\text{Pic}}(X)_\mathbb{Q}\cong Cl(X)_\mathbb{Q}$.
The ample divisor $\mathcal{Q}$ will have the form $$\mathcal{Q}=a_0\pi^*H+ \ldots,$$ where the rest of the summands will be divisors that come from the blow ups and $a_0$ a positive integer. Over Chow rings we have the formula $$\pi_*(\pi^*H\cdot [D])=H\cdot \pi_*D.$$
The divisor $\pi_*D$ is a positive linear combination of elements of $Cl(X)$ or zero, so assume its non zero and let $\bar{H}$ be an arbitrary non-zero summand.
Since $\bar{H}$ is $\mathbb{Q}$-Cartier we have that there are integers $m$ and $k$ such that $m\bar{H}=kH$ with $m,k$ coprime and $m>k.$ Thus we have that $\bar{H}=\frac{k}{m}H$, and $$H\cdot \bar{H}=\frac{k}{m}H^2\in \mathbb{Z}.$$
Since this number is an integer it means that $m$ divides $kH^2$, but $k,m$ are coprime so $m$ divides $H^2$. Here we have two cases, one for $X_{36}$ and one for $X_{50}$ where $H^2$ is $18$ and $10$ respectively. In both cases the smallest $H\cdot \bar{H}$ we can get is when $k=1$ and $m=H^2$, which makes $H\cdot \bar{H}=1,$ thus $$\mathcal{Q}\cdot [D]\geq a_0\pi^*H\cdot D =a_0 H \cdot \pi_*D = a_0 H\cdot \bar{H}=a_0,$$ so as long as $a_0>2$, $\mathcal{Q}\cdot [D]>2$.

Now, if $\pi_*D=0$, then $D$ is a positive linear combination of exceptional curves. Exceptional curves only intersect non-trivially with the curves that arise from the same blow up. Again, we have separate cases. One case is when it is a blow up of an $A_n$ singularity with $n>1$, and a second case is when it is a blow up of an $A_1$ singularity. We start with the first case. From blowing up a singularity we obtain two generators $\mathcal{L},\mathcal{L}'\in {\text{Pic}}(\tilde{X})$ and two curves $C,C'\in Cl(\tilde{X})$, with relations $\mathcal{L}=(n-1)C$ and $\mathcal{L}'=C+C'$. Their intersections are $\mathcal{L}\cdot C=-n,\mathcal{L}\cdot C'=1,\mathcal{L}'\cdot C=-1,\mathcal{L}'\cdot C'=-1,$ so
\begin{align*}
   (a\mathcal{L}+b\mathcal{L}')\cdot C&=-na-b,\\
   (a\mathcal{L}+b\mathcal{L}')\cdot C'&=a-b.
\end{align*}
We must choose $a$ and $b$ such that $-na-b>2$ and $a-b>2$. So $a$ and $b$ as coefficients of $\mathcal{L},\mathcal{L}'$ in $\mathcal{Q}$ must satisfy the former inequalities.
In the case of $A_1$ we only have the $\mathcal{L}'=C$ generator, the exceptional divisor. $\mathcal{L}'\cdot C=-2$, so  $a\mathcal{L}'\cdot C=-2a$, hence we need $-2a>2$. For all the cases we have found sufficient conditions on $\mathcal{Q}$ such that $\mathcal{Q}\cdot D>2$ for any effective divisor. Table  1 contains a list of divisors $\mathcal{Q}$ that satisfy the conditions indexed by Euler characteristic of the surface they are over.
\end{proof}

\section{Construction of $T^2$-bundles over orbisurfaces} \label{Section3}

In this  section  we focus on  $T^2$-bundles  over an orbisurface  $X$. The  general theory of such spaces from the foliations view-point is given in \cite{HS}. In particular, such bundles are determined by two rational divisors on the base orbifold and are  constructed via Seifert $S^1$-bundles.  For the definition of {\bf Seifert $S^1$-bundle} we refer to \cite[Definition 4.7.6]{BG}. Roughly speaking,  Seifert  $S^1$-bundles  are  spaces  with  a locally free $S^1$-action, for which the $S^1$-foliation has an orbifold leaf space.  A  multiple leave is an $S^1$-orbit on which the action is not globally free. 

We recall the following result, which is  Theorem 4.7.3 in \cite{BG} and has been  proven by  Koll\'ar in \cite{Kollar2004}.

\begin{te} \label{theor3.1}
Let $X$ be a normal reduced complex space with at worst quotient singularities and $\Delta=\sum_i (1-\frac{1}{m_i})D_i$
be a $\mathbb{Q}$ divisor (this is the data associated to an orbifold). Then there is a one-to-one correspondence between Seifert ${\mathbb C}^*$-bundles $f:Y\rightarrow (X,\Delta)$ and the  following data:

(i) For each $D_i$ an integer $0\leq b_i< m_i$ relatively prime to $m_i$, and

(ii) a linear equivalence class of Weil divisors $B\in Div(X)$.
\end{te}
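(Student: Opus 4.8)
The plan is to realise the correspondence by first trivialising the bundle away from the orbifold locus and then controlling the two independent sources of local ambiguity: the behaviour along the branch divisors $D_i$, which produces the residues in (i), and the underlying integral class, which produces the Weil divisor in (ii). Write $X^{\circ}=X\setminus\bigl(\mathrm{Sing}(X)\cup\bigcup_i D_i\bigr)$ for the locus where $X$ is smooth and $\Delta$ is trivial. Over $X^{\circ}$ a Seifert $\mathbb{C}^*$-bundle is an honest principal $\mathbb{C}^*$-bundle, hence is classified by its invertible sheaf of sections $\mathcal{L}^{\circ}\in\mathrm{Pic}(X^{\circ})$; taking the Zariski closure of a divisor representing $\mathcal{L}^{\circ}$ yields a Weil divisor on $X$, well defined modulo divisors supported on $\mathrm{Sing}(X)\cup\bigcup_i D_i$. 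The whole point of the theorem is that this ambiguity is resolved by exactly one residue $b_i$ per branch divisor, together with the integral remainder $B$.

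For the local picture I would fix a general point of a branch divisor $D_i$ and pass to an orbifold chart of the form $U\times\mathbb{C}$ on which $\mathbb{Z}_{m_i}$ acts by a primitive character on the $\mathbb{C}$-factor and trivially along $U$ (the generic point of $D_i$ is a smooth point of $X$, so quotient singularities do not enter here). A $\mathbb{Z}_{m_i}$-equivariant $\mathbb{C}^*$-bundle on such a chart is determined up to isomorphism by the character $\zeta\mapsto\zeta^{b_i}$ by which $\mathbb{Z}_{m_i}$ acts on the fibre; the requirement that the total space be Seifert with leaf space $(X,\Delta)$ forces the prescribed orbit structure, which holds precisely when $\gcd(b_i,m_i)=1$, and $b_i$ is then well defined modulo $m_i$. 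This extracts the data (i) from $f$ and, run in reverse, prescribes the local model to be installed along each $D_i$.

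For the construction in the reverse direction, given data $(b_i,B)$ I would define
\[
  Y=\mathrm{Spec}_X\ \bigoplus_{d\in\mathbb{Z}}\mathcal{O}_X\!\Bigl(dB+\sum_i\bigl\lfloor d\,b_i/m_i\bigr\rfloor D_i\Bigr),
\]
the $\mathbb{Z}$-grading supplying the $\mathbb{C}^*$-action. The superadditivity $\lfloor d\,b_i/m_i\rfloor+\lfloor d'\,b_i/m_i\rfloor\le\lfloor(d+d')\,b_i/m_i\rfloor$ of the floor function is exactly what makes multiplication of sections land in the correct graded piece, so the direct sum is a sheaf of $\mathcal{O}_X$-algebras. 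Over $X^{\circ}$ the fractional terms vanish and $Y$ reduces to the $\mathbb{C}^*$-bundle of $\mathcal{O}_X(B)$, matching (ii), while a direct comparison near a general point of $D_i$ recovers the $\mathbb{Z}_{m_i}$-quotient model above with invariant $b_i$, matching (i). One then checks the two assignments are mutually inverse, using normality of $X$ to compare the sheaves of $\mathbb{C}^*$-invariant functions away from a set of codimension two.

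The step I expect to be the main obstacle is verifying that this graded algebra is finitely generated and that $Y$ is normal with exactly the quotient singularities encoded by $(X,\Delta)$, rather than some larger degeneracy; in particular one must control the construction at the quotient singular points of $X$ themselves, not merely along the smooth part of the $D_i$, and show that the splitting of a Weil class into the integral part $B$ and the fractional residues $b_i/m_i$ is canonical. Because $X$ is singular, the group of Weil divisor classes is subtle and the separation of \emph{integral} from \emph{fractional} data is where the bookkeeping must be done with genuine care; it is here that the normality of $X$ and the structure of the cyclic quotient singularities are used in an essential way.
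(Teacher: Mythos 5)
The paper does not actually prove this statement: it is quoted verbatim as Theorem 4.7.3 of \cite{BG}, with the proof attributed to Koll\'ar \cite{Kollar2004}, so there is no in-paper argument to compare against. Your sketch reproduces the outline of Koll\'ar's original argument --- reading off the residue $b_i$ from the $\mathbb{Z}_{m_i}$-equivariant local model along each branch divisor and reconstructing $Y$ as $\mathrm{Spec}_X\bigoplus_{d}\mathcal{O}_X\bigl(dB+\sum_i\lfloor db_i/m_i\rfloor D_i\bigr)$ --- and the technical points you flag at the end (finite generation, normality of $Y$, canonicity of the splitting into integral and fractional parts) are precisely the ones Koll\'ar settles, using that every Weil divisor on a variety with quotient singularities is $\mathbb{Q}$-Cartier, so the graded algebra is locally finite over the subalgebra of degrees divisible by a suitable integer.
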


In the paper we will   consider the case when $Y$ is smooth and we need  to consider the \lq\lq smoothness part\rq\rq \, of Theorem 4.7.7 in \cite{BG}:

\begin{te}

If $(X,\Delta)$ is a locally cyclic orbifold as in the Theorem above and $f: Y \rightarrow (X,\Delta)$  is an $S^1$-orbibundle whose local uniformizing groups inject into the group $S^1$ of the orbibundle, then $f: Y \rightarrow (X,\Delta)$ is a Seifert $S^1$-bundle and $Y$  is smooth.

\end{te}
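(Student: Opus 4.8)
The plan is to argue locally over the uniformizing charts of $(X,\Delta)$, showing that the injectivity hypothesis forces each finite local uniformizing group to act \emph{freely} on the local total space, so that the quotient is a smooth manifold; the global smoothness then follows by gluing, and the Seifert structure is read off from the induced $S^1$-action. Concretely, I would fix $x\in X$ and a uniformizing chart $U\cong \tilde U/G$, where $G\cong\mathbb{Z}_m$ is the cyclic local uniformizing group and $\tilde U$ is smooth. By the definition of an $S^1$-orbibundle, $f^{-1}(U)$ has the form $\tilde P/G$, where $\tilde P\to \tilde U$ is an honest principal $S^1$-bundle equipped with a lift of the $G$-action commuting with the circle action. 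Shrinking $U$ we may trivialize $\tilde P\cong \tilde U\times S^1$, and the lifted action of a generator $g$ of $G$ takes the form $g\cdot(\tilde u,t)=(g\cdot\tilde u,\ \rho(g)\,t)$, where $\rho\colon G\to S^1$ is the homomorphism through which $G$ maps into the structure group. The hypothesis that the local uniformizing groups inject into $S^1$ is precisely the statement that each such $\rho$ is injective.

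The crucial step is to verify that this $G$-action on $\tilde P$ is free. Let $g\neq e$ in $G$ and suppose $g$ fixes some $(\tilde u,t)\in\tilde P$. Projecting to the base forces $g\cdot\tilde u=\tilde u$, and on the circle fibre over $\tilde u$ the element $g$ acts by rotation by $\rho(g)$; since $\rho$ is injective and $g\neq e$ we have $\rho(g)\neq 1$, so this rotation is fixed-point free on $S^1$, a contradiction. (If $g\cdot\tilde u\neq \tilde u$ then $g$ moves the whole fibre and again fixes nothing.) Hence every nontrivial element of $G$ acts without fixed points, so $G$ acts freely on $\tilde P$. A free action of a finite group on a smooth manifold admits a smooth quotient, whence $f^{-1}(U)\cong \tilde P/G$ is smooth; as $x$ was arbitrary and these charts glue via the orbibundle transition data, $Y$ is a smooth manifold.

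Finally I would exhibit the Seifert structure. The $S^1$-action on the fibre factor of $\tilde P$ commutes with $G$ and therefore descends to an $S^1$-action on $Y$. Over a point with trivial uniformizing group this action is free, while over $x$ the image of the fibre is $(\{\tilde x\}\times S^1)/G\cong S^1$, on which $s\in S^1$ acts with stabilizer $\rho(G)\cong\mathbb{Z}_m$; thus the $S^1$-action is locally free with finite cyclic stabilizers and its leaf space is exactly the orbifold $(X,\Delta)$, the multiple leaf over $x$ having multiplicity $m$. By the description of a Seifert $S^1$-bundle recalled before Theorem \ref{theor3.1}, this makes $f\colon Y\to(X,\Delta)$ a Seifert $S^1$-bundle, with data matching the correspondence of that theorem. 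The one point demanding genuine care — and the real content of the statement — is the freeness established in the second step: it is the \emph{injectivity} of $\rho$ on each uniformizing group, not merely its nontriviality, that removes the orbifold singularities upstairs and yields a smooth total space.
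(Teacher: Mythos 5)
Your argument is correct, but note that the paper itself offers no proof of this statement: it is quoted verbatim as the ``smoothness part'' of Theorem 4.7.7 in Boyer--Galicki's book \cite{BG}, so there is nothing in the paper to compare against. What you have written is precisely the standard local argument underlying that cited result --- injectivity of $\rho\colon G\to S^1$ forces the lifted $G$-action on the local principal bundle $\tilde P$ to be free, hence the quotient is smooth, and the descended locally free $S^1$-action with leaf space $(X,\Delta)$ gives the Seifert structure. The only point worth tightening is your tacit assumption that the trivialization can be chosen so that $g$ acts on the fibre by a single rotation $\rho(g)$ independent of $\tilde u$; in a general trivialization the fibre rotation is a cocycle $\rho(g,\tilde u)$, but for $\tilde u\in\mathrm{Fix}(g)$ it is locally constant and $\mathrm{Fix}(g)$ is connected in a linearized chart, so it equals $\rho(g,\tilde x)\neq 1$, and the freeness argument goes through unchanged.
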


For algebraic orbifolds this could be refined (see  Theorem 4.7.8 in \cite{BG} and \cite{K05}) and for an orbifold $(X,\Delta)$ with trivial $H^1_{orb}(X,{\mathbb Z})$ the Seifert $S^1$-bundle  $Y$  is uniquely determined by its first Chern class $c_1(Y/X)\in H^2(X,{\mathbb Q})$, which is defined as $[B] + \sum_i \frac{b_i}{m_i} [D_i]$.

We want to use the examples of K3 orbisurfaces   in the previous section  to construct examples carrying solutions of the Hull-Strominger system. To this aim we can prove  the following result  which already  appeared  in  \cite{KMR}. Here we provide also a more direct proof.

\begin{te}
    Let $\mathcal{X}$ be a complex orbisurface with only $A_n$ isolated singularities, let $\pi:\tilde{\mathcal{X}}\rightarrow\mathcal{X}$  be a chain of blow ups of singular points with at least  one $A_n$ with $n>1$, and denote $\bar{H}:=\mathcal{O}_{\mathcal{X}}(1)$ as well as its pullbacks through the chain of blow ups. Assume that the Seifert $S^1$-bundle given by $c_1=\bar{H}$   is smooth. Then there is an ample divisor $\mathcal{E}\in {\text{Pic}}^{orb}(\tilde{\mathcal{X}})\otimes\mathbb{Q}$, and $\mathcal{D},\mathcal{D}'\in {\text{Pic}}^{orb}(\tilde{\mathcal{X}})$ such that
    \begin{enumerate}
        \item $\mathcal{D},\mathcal{D}'$ are primitive with respect to $\mathcal{E}$, i.e. $\mathcal{D}\cdot\mathcal{E}=0,\mathcal{D}'\cdot\mathcal{E}=0$;
        \item if $K_\mathcal{X}=0$ and $\pi^{orb}_1(X)=1$, then $K_{\tilde{\mathcal{X}}}=0$ and $\pi^{orb}_1(\tilde{\mathcal{X}})=1$;
        \item the Seifert $S^1$-bundles  $f_1:Y_1\rightarrow\tilde{\mathcal{X}}$ with $c_1(Y_1/\tilde{\mathcal{X}})=\mathcal{D}$, and $f_2:Y_2\rightarrow Y_1$ with $c_1(Y_2/Y_1)=\pi^*\mathcal{D}'$ are smooth and $H_1(Y_1,\mathbb{Z})=0$ and $H_1(Y_2,\mathbb{Z})=0.$
    \end{enumerate}

\end{te}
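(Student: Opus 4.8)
The plan is to construct the divisors $\mathcal{E}$, $\mathcal{D}$, $\mathcal{D}'$ explicitly on $\tilde{\mathcal{X}}$ using the Picard group decomposition established earlier, then verify each of the three properties in turn. I would take $\mathcal{E}$ to be the ample divisor $\mathcal{Q}$ produced by Lemma \ref{lm2.4} (so that $\mathcal{E}^2=2$ and no effective divisor $D$ satisfies $D\cdot\mathcal{E}\leq 2$), since this is precisely the tool that controls intersection numbers against candidate primitive classes. The key observation is that the orbifold Picard group $\text{Pic}^{orb}(\tilde{\mathcal{X}})$ splits as $\langle \bar{H}\rangle \oplus \mathbb{Z}^k$, where the $\mathbb{Z}^k$ summand is spanned by the exceptional classes $\mathcal{L}_i,\mathcal{L}_i'$ (or $\mathcal{G}_i$ for the $A_1$ points) coming from the blow-ups. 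Since at least one blown-up singularity has $n>1$, we have genuine exceptional classes $\mathcal{L}=(n-1)C$, $\mathcal{L}'=C+C'$ available, and I would build $\mathcal{D},\mathcal{D}'$ out of these.

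First I would establish property (1). Because the exceptional divisors of distinct blow-ups have vanishing mutual intersections and meet $\bar H$ trivially, the intersection form restricted to the exceptional $\mathbb{Z}^k$ lattice is block-diagonal and negative definite (the remark preceding Lemma \ref{lm2.4} records $\mathcal{L}^2=-n(n-1)$, $\mathcal{L}\cdot\mathcal{L}'=-(n-1)$ for one such block). The requirement $\mathcal{D}\cdot\mathcal{E}=\mathcal{D}'\cdot\mathcal{E}=0$ forces $\mathcal{D},\mathcal{D}'$ to lie in the orthogonal complement of $\mathcal{E}$; since $\mathcal{E}=a_0\bar H+(\text{exceptional terms})$, I would solve the linear conditions $\mathcal{D}\cdot\mathcal{E}=0$ by choosing $\mathcal{D},\mathcal{D}'$ as suitable integer combinations of the exceptional generators together with a correcting multiple of $\bar H$, which is always possible because $\bar H\cdot\mathcal{E}=a_0\bar H^2\neq 0$ provides the needed nondegeneracy. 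I would pick $\mathcal{D},\mathcal{D}'$ to be linearly independent primitive integral classes so that the resulting $T^2$-bundle has the right rank-two structure.

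Next, property (2) is essentially automatic from the geometry of $A_n$ resolutions: blowing up an $A_n$ singularity is crepant, so $K_{\tilde{\mathcal{X}}}=\pi^*K_{\mathcal{X}}$, whence $K_{\mathcal{X}}=0$ gives $K_{\tilde{\mathcal{X}}}=0$. For the orbifold fundamental group, I would use that blowing up a point (or resolving an isolated quotient singularity, which is simply connected locally) does not change $\pi_1^{orb}$ — the exceptional locus is simply connected and the complement deformation-retracts appropriately — so $\pi_1^{orb}(\mathcal{X})=1$ yields $\pi_1^{orb}(\tilde{\mathcal{X}})=1$. For property (3), I would invoke the smoothness criterion (Theorem 4.7.7 of \cite{BG}, quoted above): the Seifert $S^1$-bundle $f_1$ with $c_1(Y_1/\tilde{\mathcal{X}})=\mathcal{D}$ is smooth provided the local uniformizing groups inject into $S^1$, which reduces to a coprimality condition on the Seifert invariants $b_i/m_i$ attached to $\mathcal{D}$; by the hypothesis that the bundle with $c_1=\bar H$ is smooth together with the freedom in choosing $\mathcal{D}$ among primitive classes, I can arrange this. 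Then $H_1(Y_1,\mathbb{Z})=0$ follows from the Gysin sequence of the Seifert fibration combined with $\pi_1^{orb}(\tilde{\mathcal{X}})=1$ and the primitivity of $\mathcal{D}$, and the same argument applied to $f_2$ over $Y_1$ gives $H_1(Y_2,\mathbb{Z})=0$.

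The main obstacle I expect is property (1) in the presence of property (3): I must choose $\mathcal{D},\mathcal{D}'$ to be simultaneously $\mathcal{E}$-primitive \emph{and} to define smooth Seifert bundles with vanishing $H_1$, and these constraints interact. The orthogonality conditions pin $\mathcal{D},\mathcal{D}'$ down to a corank-one sublattice, while smoothness and the homology vanishing impose divisibility/coprimality conditions on the same coefficients; verifying that a common solution exists — rather than an empty intersection of constraints — is where the real work lies, and this is exactly why the explicit bound from Lemma \ref{lm2.4} and the nonemptiness guaranteed by Table 1 are needed. I would resolve it by exhibiting concrete $\mathcal{D},\mathcal{D}'$ for each surface $X_{30},X_{36},X_{50}$ and checking the finitely many numerical conditions directly.
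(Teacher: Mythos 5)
Your overall plan (explicit divisors, Nakai--Moishezon for ampleness, crepancy of $A_n$ blow-ups for (2), Koll\'ar's local-class-group and gcd criteria for (3)) matches the paper's, and you correctly identify the crux: the orthogonality, smoothness and $H_1=0$ constraints interact. But you stop exactly where the proof begins. The paper resolves the interaction by \emph{inverting your order of choices}: it first fixes $\mathcal{D}=\bar{H}-k(bn-a)C$ and $\mathcal{D}'=\bar{H}-k(an-b)C'$ with the special parametrization $a=mn+c-1$, $b=c+m$ (where $c=\bar H^2$), and only then builds a rational ample class $\mathcal{E}=\frac{k(an-b)(bn-a)}{(n-1)\bar{\mathcal{E}}\cdot\bar H}\varphi^*\bar{\mathcal{E}}-aC-bC'$ orthogonal to both. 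This parametrization is the whole point: it makes $bn-a=c(n-1)+1$, which is automatically coprime to $n-1$ (smoothness at the residual $A_{n-2}$ point, whose local class group is $\mathbb{Z}/(n-1)\mathbb{Z}$) and to $c=\mathcal{D}\cdot H$ (Koll\'ar's gcd condition), while $an-b\equiv(n^2-1)m-n\pmod{c}$ is handled by Dirichlet's theorem on primes in arithmetic progressions to choose $m$. Deferring all of this to ``checking finitely many numerical conditions for each of $X_{30},X_{36},X_{50}$'' would not even prove the theorem as stated, since it is asserted for an arbitrary orbisurface with an $A_n$, $n>1$, blown up.

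Two of your specific choices would actively cause trouble. First, taking $\mathcal{E}$ to be the divisor $\mathcal{Q}$ of Lemma \ref{lm2.4} and then correcting $\mathcal{D},\mathcal{D}'$ by ``a suitable multiple of $\bar H$'' to achieve orthogonality breaks the smoothness argument at the singular points \emph{not} involved in the blow-up: there the paper needs $\mathcal{D}$ to restrict to $\bar H$ itself (a generator of each local class group by hypothesis), whereas a multiple $\lambda\bar H$ generates only if $\lambda$ is coprime to every local group order --- an extra condition you never impose. The paper avoids this entirely by keeping the $\bar H$-coefficient of $\mathcal{D},\mathcal{D}'$ equal to $1$ and putting all the flexibility into $\mathcal{E}$ (which is allowed to be merely $\mathbb{Q}$-ample). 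Second, your claim that $H_1(Y_1,\mathbb{Z})=0$ follows from ``primitivity of $\mathcal{D}$'' via a Gysin argument is not sound here: the intersection pairing on ${\text{Pic}}^{orb}$ of these orbisurfaces is not unimodular, so a lattice-primitive class need not pair with $H_2(\tilde X,\mathbb{Z})$ with gcd $1$; one must exhibit explicit cycles, as the paper does with $H$ and $(n-1)C'$, resp.\ $(n-1)C$.
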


\begin{proof}
    We only need to check the case when the last blow up $\varphi:\tilde{\mathcal{X}}\rightarrow\mathcal{Y}$ in the chain of blow ups $\pi:\tilde{\mathcal{X}}\rightarrow\mathcal{X}$ is of an $A_n$ singularity with $n>1.$  We consider the divisors to be of the form:
    \begin{align*}
    \mathcal{E}&:=\frac{k(an-b)(bn-a)}{(n-1)\bar{\mathcal{E}}\cdot \bar{H}}(\varphi^*\bar{\mathcal{E}})-aC-bC',\\
\mathcal{D}&:=\bar{H}-k(bn-a)C,\\
\mathcal{D}'&:=\bar{H}-k(an-b)C';
\end{align*}
where $n$ is the index in $A_n$, $a=mn+c-1$, $b=c+m$,  with $k,m\in \mathbb{Z}^+$ to be determined later.

We first check that $\mathcal{E}$ is indeed an ample divisor via the Nakai–Moishezon ampleness criterion which says that $\mathcal{E}$ is ample if $\mathcal{E}^2>0$ and $\mathcal{E}\cdot D>0$ for any irreducible curve $D$ in $\tilde{\mathcal{X}}$. Any irreducible curve $D$ will either be the proper preimage of a curve in $\mathcal{Y}$ making $\mathcal{E}\cdot D=q(\varphi^*\bar{\mathcal{E}}\cdot D)=q(\bar{\mathcal{E}}\cdot \varphi_*D)>0$ where $q$ is the rational number in front of $\varphi^*\bar{\mathcal{E}}$ in $\mathcal{E}$, or it will be $C$ or $C'$ making $\mathcal{E}\cdot D=\mathcal{E}\cdot C$. The expression $q(\bar{\mathcal{E}}\cdot \varphi_*D)>0$ is true because we assumed that  $\bar{\mathcal{E}}$ is ample and $q$ is a positive number. So we only have to check for the other cases, and if $\mathcal{E}^2>0$ we get:\\
\begin{align*}
    \mathcal{E}\cdot C&=-aC^2-bC\cdot C'\\
     &=\frac{an-b}{n-1}=\frac{n(mn-1)+c(n-1)-m}{n-1}>0\\
     \mathcal{E}\cdot C'&=-aC\cdot C'-bC'^2\\
      &=\frac{bn-a}{n-1}=\frac{c(n-1)+1}{n-1}>0\\
      \mathcal{E}^2&=\frac{k^2(an-b)^2(bn-a)^2\bar{\mathcal{E}}^2}{(n-1)^2(\bar{\mathcal{E}}\cdot \bar{H})^2}-\left(\frac{a(an-b)+b(bn-a)}{n-1}\right)>0,
\end{align*}
the last inequality is because we can choose $k$ to be big enough.

Now we show that $\mathcal{D}$ and $\mathcal{D}'$ are primitive with respect to $\mathcal{E}$. This follows from:
\begin{align*}
    \mathcal{E}\cdot \mathcal{D}&=\mathcal{E}\cdot \bar{H}-k(bn-a)\mathcal{E}\cdot C\\
    &=\frac{k(an-b)(bn-a)}{(n-1)\bar{\mathcal{E}}\cdot\bar{H}}\bar{\mathcal{E}}\cdot\bar{H}-\frac{k(bn-a)(an-b)}{n-1} =0,\\
    \
    \mathcal{E}\cdot \mathcal{D}'&=\mathcal{E}\cdot\bar{H}-k(an-b)\mathcal{E}\cdot C'\\
    &=\frac{k(an-b)(bn-a)}{n-1}-k(an-b)\frac{bn-a}{n-1} =0.\\
  \end{align*}
The second property,  i.e. $K_{\tilde{\mathcal{X}}}=0$ and $\pi^{orb}_1(\tilde{\mathcal{X}})=1$, follows from the fact that $A_n$ singularities are characterized by the property $K_{\tilde{\mathcal{X}}}\cong \pi^*K_{\mathcal{X}}$ for $\pi:\tilde{\mathcal{X}}\rightarrow \mathcal{X}$ a blow-up at the singularity. By definition of $\pi^{orb}_1$ in \cite{K05} and \cite{Thu}, blowing up can only remove classes from the orbifold fundamental group of the base. Since we have $\pi^{orb}_1(\mathcal{X})=1$, then $\pi^{orb}_1(\tilde{\mathcal{X}})=1$.

The condition of smoothness of $S^1$-Seifert bundles is given in Kollar \cite{K05} and Boyer-Galicki \cite{BG}. We have to check if $\mathcal{D}$ and $\mathcal{D}'$ are generators of the local class groups at all the points of $\Tilde{X}$. For the singular points not involved in the blow up, $\bar{H}$ is a generator of their local class group since $\bar{H}$ defines a smooth $S^1$-Seifert bundle by assumption. $\mathcal{D}$ and $\mathcal{D}'$ restricted to those points are equal to $\bar{H}$. For the remaining singular point (that is when $n>2$ since if $n=2$ the blow-up of that singular point will be smooth in that neighborhood), we just need to check if $bn-a$ and $an-b$ are coprime to $n-1$ since the local class group of the remaining singularity is $\mathbb{Z}/(n-1)\mathbb{Z}$. We control $k$ which is multiplying both $bn-a$ and $an-b$ so we can just make it coprime to $n-1$. Observe that $bn-a=c(n-1)+1$, and $an-b=c(n-1)+(n^2-1)m-n$, so
$$an-b\text{ (mod }n-1)= -n \text{ (mod }n-1)=n-2 \text{ (mod }n-1);$$
and
$$bn-a\text{ (mod }n-1)=c(n-1)+1\text{ (mod }n-1)=1\text{ (mod }n-1).$$
Thus the $S^1$-Seifert bundles are smooth.

Finally, we have to check if $\mathcal{D}$ and $\mathcal{D}'$ define $S^1$-Seifert bundles  over $\tilde{X}$ that are simply connected. The conditions for this are given by Koll\'ar \cite{K05}, and for our case we just have to check if there are elements $\alpha, \beta$ in $H_2(\Tilde{X},\mathbb{Z})$ such that $gcd(\mathcal{D}\cap \alpha, \mathcal{D}\cap \beta)=1$, same for $gcd(\mathcal{D}'\cap \alpha, \mathcal{D}'\cap \beta)=1$, with $\alpha, \beta$ not necessarily the same for $\mathcal{D}$ and $\mathcal{D}'$. 

Consider $H$ and $(n-1)C'$ for $\mathcal{D}$, we have $\mathcal{D}\cdot H=H^2=c$ and
$$-\mathcal{D}\cdot (n-1)C'= (bn-a)C\cdot (n-1)C'=bn-a= c(n-1)+1,$$
fulfilling the requirement. Now consider $H$ and $(n-1)C$ for $\mathcal{D}'$, we have $\mathcal{D}'\cdot H=H^2=c$ and
$$-\mathcal{D}'\cdot (n-1)C= (an-b)C'\cdot (n-1)C=an-b.$$
The expression $$an-b=c(n-1)+(n^2-1)m-n\text{ (mod }c)= (n^2-1)m-n\text{ (mod }c),$$ so we need $(n^2-1)m-n$ to be coprime to $c$. By Dirichlet's theorem on arithmetic progressions, which applies since $gcd(n^2-1,n)=1$, we have that there are infinitely many primes for different $m$ of the form $(n^2-1)m-n$, thus we can always choose one such that $(n^2-1)m-n$ is coprime to $c$.
\end{proof}

\begin{re} Note that,  by the classification in  \cite{GL},   the $T^2$-bundles over  K3 orbisurfaces   $X$  we consider  must be   connected sums of the form as in Theorem 1.1, where $k$ and $r$ correspond respectively to $b_2^{orb} (X) -1$ and $b_2^{orb} (X) -2$. Using the lists in \cite{IF}  and \cite{R} we see that $k$ and $r$ respectively   are at least  $4$ and $5$. 
\end{re}

\section{Existence of Stable Bundle} \label{Section4}

The solutions of Hull-Strominger system involve a stable bundle with  a Hermitian-Einstein  structure.
So in this section we will consider a complex algebraic orbisurface  $X$  and use the Serre construction to find the appropriate stable bundles over it. On one side if we consider $X$ simply as a complex algebraic surface, we have the category of sheaves over $X$, and the abelian category of coherent sheaves over $X$. There are analogues if we give $X$ an orbifold structure.

We begin with the definition of stability in the case of normal singular surfaces. For this we need a good definition of $c_1(\mathcal{F})$ for a coherent sheaf $\mathcal{F}$ as in \cite{Bl}. Observe that  for the divisor class group we have an isomorphism $Cl(X)\cong Cl(X_{reg})$  since $codim(\Sigma)>1$. So, we can define $c_1(\mathcal{F})$ as the image under the isomorphism of $c_1(\mathcal{F}|_{X_{reg}})$, where $c_1(\mathcal{F}|_{X_{reg}})=c_1(det(\mathcal{F}|_{X_{reg}}^{\vee\vee}))$. 

\begin{de}
    Let $\mathcal{F}$ be a coherent orbisheaf over the orbisurface $X$, and let $\mathcal{Q}$ be an ample divisor over $X$. Then the \textit{slope} of $\mathcal{F}$ with respect of $\mathcal{Q}$ is
    $$\mu_\mathcal{Q}(\mathcal{F}):=\frac{c_1(\mathcal{F})\cdot\mathcal{Q}}{rk(\mathcal{F})}.$$
    A locally free sheaf $\mathcal{E}$ is called stable if for any subsheaf $\mathcal{F}\subset \mathcal{E}$ with $rk(\mathcal{F})<rk(\mathcal{E})$ we have
    $$\mu_\mathcal{Q}(\mathcal{F})<\mu_\mathcal{Q}(\mathcal{E}).$$
\end{de}

The following lemma is an extension of the Serre's construction to the case of algebraic orbisurfaces. We follow the proof in \cite{HL}
\begin{te}
        Let $\mathcal{X}=(X,\mathcal{U})$ be an orbifold in the notations of \cite{BG} and let $Z\subset X_{reg}$ be a local complete intersection of codimension $2$, and $L,N\in {\text{Pic}}^{orb}(\mathcal{X})$. Then there is an extension
    $$0\rightarrow L\rightarrow E\rightarrow N\otimes \mathcal{I}_Z\rightarrow 0,$$
    where $E$ is locally free if and only if $(L^{\check{}}\otimes N\otimes K_\mathcal{X},Z)$ satisfies the Caley-Bacharach property.
    \end{te}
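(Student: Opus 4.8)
The plan is to reduce the statement to the classical Serre correspondence (following \cite{HL}), carried out over $X_{reg}$ where the orbifold structure is invisible, and then to perform the global Serre-duality bookkeeping with the \emph{orbifold} canonical bundle $K_{\mathcal{X}}$. First I would observe that isomorphism classes of extensions of $N\otimes\mathcal{I}_Z$ by $L$ are classified by $\mathrm{Ext}^1(N\otimes\mathcal{I}_Z,L)$, and since $N$ is an orbi-line bundle this group is $\mathrm{Ext}^1(\mathcal{I}_Z,M)$ with $M:=L^\vee\otimes N^{}$ replaced by $M:=L\otimes N^\vee$. Applying $\mathrm{Hom}(-,M)$ to $0\to\mathcal{I}_Z\to\mathcal{O}_X\to\mathcal{O}_Z\to 0$ and using that $Z$ has codimension $2$ (so $\mathcal{E}xt^{q}(\mathcal{O}_Z,\mathcal{O}_X)=0$ for $q<2$, whence $\mathrm{Hom}(\mathcal{O}_Z,M)=\mathrm{Ext}^1(\mathcal{O}_Z,M)=0$ by the local-to-global spectral sequence) yields the fundamental exact sequence
\[
0\to H^1(X,M)\to \mathrm{Ext}^1(\mathcal{I}_Z,M)\to H^0\!\big(Z,\mathcal{E}xt^2(\mathcal{O}_Z,\mathcal{O}_X)\otimes M\big)\overset{\delta}{\longrightarrow} H^2(X,M).
\]
Because $Z\subset X_{reg}$ is a local complete intersection, the Koszul resolution of $\mathcal{O}_Z$ gives $\mathcal{E}xt^2(\mathcal{O}_Z,\mathcal{O}_X)\cong \det N_{Z/X}\cong \omega_Z\otimes K_X^{-1}|_Z$ by adjunction, exactly as in the smooth case.

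Next I would establish the local freeness criterion. The image of an extension class $e\in\mathrm{Ext}^1(\mathcal{I}_Z,M)$ in $H^0(Z,\mathcal{E}xt^2(\mathcal{O}_Z,\mathcal{O}_X)\otimes M)$ is a section $\sigma$ of a locally free rank-one sheaf on the zero-dimensional scheme $Z$, and a local computation near each $z\in Z$ (choosing local generators of the ideal of $Z$ and unwinding the extension) shows that $E$ is locally free at $z$ if and only if $\sigma$ generates the stalk there, i.e. $\sigma$ is nowhere vanishing on $Z$. Since $Z$ avoids the orbifold singular locus, this step is \emph{identical} to the smooth situation in \cite{HL} and requires no orbifold input; the orbifold structure enters only through the global groups $H^i(X,M)$, which are orbifold cohomology groups.

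Finally I would translate "$\sigma$ nowhere vanishing and $\sigma\in\ker\delta$" into the Cayley--Bacharach property. Orbifold Serre duality gives $H^2(X,M)^\vee\cong H^0(X,K_{\mathcal{X}}\otimes M^\vee)$, and the trace pairing on the Artinian scheme $Z$ together with the adjunction identity $\omega_Z\cong K_X\otimes\det N_{Z/X}|_Z$ identifies $H^0(Z,\mathcal{E}xt^2(\mathcal{O}_Z,\mathcal{O}_X)\otimes M)^\vee$ with $H^0(Z,K_{\mathcal{X}}\otimes M^\vee|_Z)$, under which $\delta$ becomes the restriction map $H^0(X,K_{\mathcal{X}}\otimes M^\vee)\to H^0(Z,K_{\mathcal{X}}\otimes M^\vee|_Z)$. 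Hence a nowhere-vanishing $\sigma$ lying in $\ker\delta$ exists precisely when no section of $K_{\mathcal{X}}\otimes M^\vee$ vanishing on a colength-one subscheme $Z'\subset Z$ can fail to vanish on all of $Z$; since $K_{\mathcal{X}}\otimes M^\vee=L^\vee\otimes N\otimes K_{\mathcal{X}}$, this is exactly the Cayley--Bacharach property for $(L^\vee\otimes N\otimes K_{\mathcal{X}},Z)$, giving both directions of the equivalence. The main obstacle I anticipate is the clean formulation of orbifold Serre duality and the verification that $\delta$ is dual to restriction in this setting; the purely local lci computations transfer verbatim from the smooth theory because $Z$ lies in $X_{reg}$.
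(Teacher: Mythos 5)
Your proposal is correct and follows exactly the route the paper intends: the paper gives no written proof, simply stating that it follows \cite{HL}, and your argument is a faithful adaptation of the Huybrechts--Lehn Serre correspondence, correctly isolating the only orbifold-specific points (the local lci computations are unchanged because $Z\subset X_{reg}$, and the global input is orbifold Serre duality with $K_{\mathcal{X}}$). The identification $M=L\otimes N^\vee$ and the resulting Cayley--Bacharach pair $(L^\vee\otimes N\otimes K_{\mathcal{X}},Z)$ match the statement.
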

\vspace{.2 in}
Recall that the Caley-Bacharach property is the following: Suppose that $Z'\subset Z$ is a subscheme with $\ell(Z')=\ell(Z)-1$ and $s\in {\text{Hom}}(\mathcal{O}_\mathcal{X},L^{\check{}}\otimes N\otimes K_\mathcal{X})$ with $s|_{Z'}=0$, then $s|_Z=0$.

\begin{re} \label{rem4.1}
The notation $\tilde{X}_k$ indicates that $k$ has been added to the second Betti number of the base orbisurface $X$ through blow-ups. Here, $X$ is one of the orbisurfaces $X_{30}, X_{36}, X_{50}$. Specifically, this means that $b_2(\tilde{X}_k) = b_2(X) + k = 4 + k$. Thus, $\tilde{X}_k$ includes additional cohomology classes arising from the exceptional divisors of the blow-ups.
\end{re}

The next two Lemmas provide the existence of stable bundles on the K3 orbisurfaces obtained by blow-ups of $X_{36}$ and $X_{50}$ from Section \ref{Section2}.

\begin{lm} \label{lemma4.1}
    Let $\tilde{X}_k$ be the orbisurface obtained by blowing up singular points of $X_{36},X_{50}$, such that $0\leq k\leq 18$ with $b_2(\tilde{X}_k)=k+4$. Then for any $k>1$ there exists on $\tilde{X}_k$  a stable bundle $E$ of rank $2$ with $c_1(E)=0$ and $c_2(E)=c$ for any $c\geq 5$.
\end{lm}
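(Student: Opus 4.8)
The plan is to construct the stable bundle $E$ as a Serre extension, exactly as the preceding Serre-construction theorem permits. I would take $L = \mathcal{O}_{\tilde{X}_k}$ and $N = \mathcal{O}_{\tilde{X}_k}$, so that the desired bundle sits in an exact sequence
\begin{equation*}
0 \rightarrow \mathcal{O}_{\tilde{X}_k} \rightarrow E \rightarrow \mathcal{I}_Z \rightarrow 0,
\end{equation*}
where $Z \subset (\tilde{X}_k)_{reg}$ is a length-$c$ local complete intersection of codimension $2$ (a finite collection of $c$ reduced points in general position, avoiding the exceptional curves and the remaining orbifold singular locus). With this choice $\mathrm{rk}(E) = 2$ and $c_1(E) = c_1(L) + c_1(N) = 0$ automatically, while the standard Chern-class computation for such an extension gives $c_2(E) = \ell(Z) = c$. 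Since $\tilde{X}_k$ is a (partially resolved) K3 orbisurface we have $K_{\tilde{X}_k} = 0$ by the second property established in the previous $T^2$-bundle theorem, so the twisting sheaf in the Cayley--Bacharach condition is simply $L^{\check{}} \otimes N \otimes K_{\tilde{X}_k} = \mathcal{O}_{\tilde{X}_k}$. The Cayley--Bacharach property for $(\mathcal{O}_{\tilde{X}_k}, Z)$ then reduces to the statement that any global function vanishing on a length-$(c-1)$ subscheme $Z' \subset Z$ vanishes on all of $Z$; since global functions are constant (as $H^0(\mathcal{O}_{\tilde{X}_k}) = \mathbb{C}$) and $Z'$ is nonempty for $c \geq 2$, the only such section is $0$, so the property holds trivially and $E$ is locally free.

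Having produced a rank-$2$ locally free $E$ with $c_1(E) = 0$ and $c_2(E) = c$, the remaining task is to verify stability with respect to the ample divisor $\mathcal{Q}$ furnished by Lemma \ref{lm2.4} (the one with $\mathcal{Q}^2 = 2$ and no effective divisor $D$ satisfying $D \cdot \mathcal{Q} \leq 2$). Because $\mathrm{rk}(E) = 2$, a destabilizing subsheaf can be taken to be a rank-$1$ reflexive subsheaf, i.e. (the sheaf of sections of) a line bundle $\mathcal{O}(D)$ associated to a divisor $D$, and stability amounts to requiring
\begin{equation*}
\mu_{\mathcal{Q}}(\mathcal{O}(D)) = c_1(\mathcal{O}(D)) \cdot \mathcal{Q} = D \cdot \mathcal{Q} < \frac{c_1(E)\cdot \mathcal{Q}}{2} = 0
\end{equation*}
for every such sub-line-bundle. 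A nonzero map $\mathcal{O}(D) \hookrightarrow E$ produces, after composing with the projection $E \to \mathcal{I}_Z$, either a nonzero section of $\mathcal{I}_Z \otimes \mathcal{O}(-D)$ — forcing $-D$ to be effective and hence $D \cdot \mathcal{Q} \leq 0$ with equality only if $-D \sim 0$ — or a factorization through $L = \mathcal{O}$, giving $D \cdot \mathcal{Q} \leq 0$ likewise. To rule out the borderline case $D \cdot \mathcal{Q} = 0$ I would invoke Lemma \ref{lm2.4}: the only divisor class with $D \cdot \mathcal{Q} = 0$ that could arise is the trivial one, and a splitting $E \cong \mathcal{O} \oplus \mathcal{I}_Z$ is incompatible with $E$ being locally free when $Z \neq \emptyset$ (equivalently $c \geq 5 > 0$), so no destabilizing subsheaf of slope $0$ exists.

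The main obstacle I anticipate is the stability verification rather than the construction: one must be careful that the subsheaf test is phrased in terms of saturated reflexive rank-$1$ subsheaves on a \emph{singular} (orbifold) surface, where the class group $Cl(\tilde{X}_k)$ is larger than $\mathrm{Pic}(\tilde{X}_k)$ and destabilizing sub-objects need not be honest line bundles. This is precisely where Lemma \ref{lm2.4} does the essential work: its conclusion that no effective divisor $D$ satisfies $D \cdot \mathcal{Q} \leq 2$ guarantees that any effective class meeting $\mathcal{Q}$ nonnegatively is either trivial or has $D \cdot \mathcal{Q} > 2 > 0$, which excludes a sub-line-bundle of slope $\geq 0$ on the nose. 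I would therefore organize the proof so that every potential destabilizer is pushed into one of the two regimes handled by Lemma \ref{lm2.4} — the $\pi_*D \neq 0$ regime and the exceptional-curve regime — and close the argument by noting that the range $c \geq 5$ ensures $Z$ is a nonempty, sufficiently generic set of points so that the Cayley--Bacharach hypothesis and the non-splitting both hold. The constraints $k > 1$ and $0 \leq k \leq 18$ enter only through the existence of the ample $\mathcal{Q}$ of Lemma \ref{lm2.4} and through $b_2(\tilde{X}_k) = k + 4$ controlling how many exceptional classes must be checked.
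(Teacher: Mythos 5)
Your construction has a genuine gap at the stability step, and it is not a repairable detail: the extension
\begin{equation*}
0 \rightarrow \mathcal{O}_{\tilde{X}_k} \rightarrow E \rightarrow \mathcal{I}_Z \rightarrow 0
\end{equation*}
can never produce a stable bundle of degree $0$. Here $\mu_{\mathcal{Q}}(E)=0$, while the first arrow exhibits $\mathcal{O}_{\tilde{X}_k}$ as a saturated rank-$1$ subsheaf of $E$ (saturated because the quotient $\mathcal{I}_Z$ is torsion-free) with $\mu_{\mathcal{Q}}(\mathcal{O}_{\tilde{X}_k})=0=\mu_{\mathcal{Q}}(E)$. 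This already violates the strict inequality in the definition of stability used in the paper. Your attempt to dispose of the borderline case $D\cdot\mathcal{Q}=0$ by arguing that a splitting $E\cong\mathcal{O}\oplus\mathcal{I}_Z$ would contradict local freeness confuses stability with indecomposability: stability is tested against subsheaves, not direct summands, and $\mathcal{O}_{\tilde{X}_k}\subset E$ exists by construction whether or not the sequence splits. The resulting $E$ is strictly semistable and not polystable, hence carries no Hermitian--Einstein metric, which defeats the purpose of the lemma in the Hull--Strominger construction. A symptom of the problem is that your argument places no constraint forcing $c\geq 5$; that bound is an artifact of the correct construction, not of yours.

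The paper's proof sidesteps exactly this issue by taking $L=\mathcal{O}_{\tilde{X}}$ and $N=2\mathcal{Q}$ in the Serre construction, producing $E'$ with $\det E'=2\mathcal{Q}$ and $\mu_{\mathcal{Q}}(E')=\mathcal{Q}^2=2>0=\mu_{\mathcal{Q}}(\mathcal{O}_{\tilde{X}})$, so the tautological subsheaf $\mathcal{O}_{\tilde{X}}$ no longer destabilizes. Any destabilizing $N\subset E'$ then cannot factor through $\mathcal{O}_{\tilde{X}}$, so it maps nontrivially to $2\mathcal{Q}\otimes\mathcal{I}_Z$ and vanishes along an effective divisor $D\supset Z$ with $D\cdot\mathcal{Q}=\mu_{\mathcal{Q}}(2\mathcal{Q})-\mu_{\mathcal{Q}}(N)\leq\mathcal{Q}^2=2$, which is precisely what Lemma \ref{lm2.4} rules out. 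One then twists by $-\mathcal{Q}$ (twisting by a line bundle preserves stability) to normalize $c_1(E)=0$, getting $c_2(E)=\ell(Z)-\mathcal{Q}^2=\ell(Z)-2$. The Cayley--Bacharach condition must now be checked for the pair $(2\mathcal{Q},Z)$ rather than $(\mathcal{O},Z)$, and it holds once $\ell(Z)>h^0(2\mathcal{Q})=\chi(2\mathcal{Q})=2+2\mathcal{Q}^2=6$; this is exactly the source of the bound $c_2(E)\geq 5$ in the statement. If you want to salvage your write-up, replace $N=\mathcal{O}$ by $N=2\mathcal{Q}$ throughout and run your (otherwise sensible) composition argument against the quotient $2\mathcal{Q}\otimes\mathcal{I}_Z$, where Lemma \ref{lm2.4} does the work you intended it to do.
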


The case when $k=1$ will be discussed after this lemma. In that case we blow up an $A_1$ singularity, and it becomes impossible to use the following methods to find an ample divisor $Q$ such that $$c_2(E)=2+Q^2<e_{orb}(\tilde{\mathcal{X}})=7-\left (\frac{7}{8}+\frac{10}{11}\right ).$$ We will make it work by doing the constructions on the orbifold category. 

\begin{proof} Let $\pi:\tilde{X}\rightarrow X$ be a chain of blow ups of singular points. Assuming Lemma \ref{lm2.4}  we choose an ample divisor $Q$ such that $Q^2=2$. 

We construct a rank 2  bundle $E'$ with $det(E')=2Q$ and $c_2(E')=\ell$, so the bundle $E=E'\otimes \mathcal{O}_{\tilde{X}}(2Q)$ will be stable with $c_1(E)=0$ and $c_2(E)=c_2(E')-Q^2=\ell-Q^2=\ell-2.$ And we choose a codim 2, $Z\subset \tilde{X}\setminus \Sigma$ such that $\ell(Z)=\ell.$

We can check  the Cayley–Bacharach
condition for the pair $(\mathcal{O}_{\tilde{X}}^{\check{}} \otimes (2Q)\otimes K_{\tilde{X}},Z).$
 Let $\ell_1=h^0(K_{\tilde{X}}\otimes(2Q))=h^0(2Q).$ Since $Q$ is ample by the Kawamata–Viehweg vanishing theorem $\chi(2Q)=\ell_1$. We calculate $$\chi(2Q)=\chi(\mathcal{O}_{\tilde{X}})+\frac{1}{2}(2Q)^2=2+4=6.$$ If we let $\ell>\ell_1$, then by the Cayley–Bacharach condition implies that $E'$ is a locally free sheaf of rank $2$.

    Suppose that $N\subset E'$ was a destabilizing line bundle. Then
    $$\mu_Q(N)\geq \mu_Q(E')=Q^2>0=\mu_Q(\mathcal{O}_{\tilde{X}}),
    $$
  which implies that    $N$  can not be contained in $ \mathcal{O}_{\tilde{X}}.$
As a consequence the composition map
    $$ N\rightarrow E'\rightarrow 2Q\otimes \mathcal{I}_Z $$
     is non zero and it vanishes along $D$ with $Z\subset D$ and $D\cdot Q=\mu_Q(2Q)-\mu_Q(N)\leq Q^2=2.$\\
    By the Lemma \ref{lm2.4} we assumed such $D$ does not exist.

\end{proof}

We note that the lemma is independent of the orbifold structure. 
That is, the vector bundle $E$ (locally free sheaf) is independent of the orbifold structure over $\tilde{X}_1$ with $k>1$. It may construct a stable vector bundle for the case when $k=1$, but in that case any ample divisor $Q$ would have $Q^2>2$, which in turn will make $c_2(E)>5$ and $e_{orb}(\tilde{X}_1)-c_2(E)<0$ thus making the slope parameter $\alpha'<0$.

We note here that we can construct many stable bundles $E$ for which $\alpha'<0$. Existence of solutions for the Hull-Strominger system in this case follows in the same way as  for $\alpha'>0$ in \cite{FY1}. By Theorem 1 in \cite{PPZ17} one even has a solution for which the function $e^u$ can have pre-assigned integral as long as it is large enough. However, as in \cite{FY1} we consider  only the case  $\alpha' > 0$. The following lemma solved this issue.

\begin{lm} \label{lemma4.2}
    Let $X=X_{30}\subset\mathbb{P}(5,6,8,11)$ which has $A_1,A_7,A_{10}$ and let $\pi:\tilde{X}\rightarrow X$ the blow up of the $A_1$ singularity, and let $\tilde{\mathcal{X}}=(\tilde{X},\mathcal{U})$ be the ``canonical" orbifold structure. Then there exist a stable vector orbibundle (called also V-bundle)  $E$ over $\tilde{\mathcal{X}}$ with $c_1^{orb}(E)=0$ and $c_2^{orb}(E)=c-\frac{2}{11}$ for every $c\geq 3$.
\end{lm}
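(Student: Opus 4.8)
The plan is to carry the argument of Lemma~\ref{lemma4.1} into the orbifold category, where the rational Picard group $\text{Pic}^{orb}(\tilde{\mathcal{X}})$ provides self-intersection numbers that are unavailable on the smooth partial resolution. The reason the smooth argument fails here is arithmetic: on $\tilde X$ (with only the $A_1$ point resolved) the integral Picard lattice is $\langle \pi^*H, C\rangle$ with $H^2 = 88$, $C^2 = -2$ and $\pi^*H\cdot C = 0$, so every Cartier class has even self-intersection $88a^2 - 2c^2$; the equation $88a^2 - 2c^2 = 2$ has no integral solution, and the small positive values of this form in fact lie well above $2$. Consequently any ample Cartier $Q$ has $Q^2$, and hence the minimal admissible $c_2(E)$, too large for the slope parameter $\alpha'$ to remain positive, exactly as observed in the text. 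Allowing $\mathbb{Q}$-Cartier orbifold divisors removes this rigidity and is what makes the fractional value $c-\tfrac{2}{11}$ attainable.

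First I would assemble the orbifold data on $\tilde{\mathcal{X}}$: from Lemma~\ref{lem2.1} ($n=1$) and the ensuing corollary, $\text{Pic}^{orb}(\tilde{\mathcal{X}}) = \langle \mathcal{O}_{\tilde{\mathcal{X}}}(1)\rangle \oplus \langle C\rangle$ with $\mathcal{O}_{\mathcal{X}_{30}}(1)^2 = \tfrac{1}{88}$, $C^2 = -2$ and $\mathcal{O}_{\tilde{\mathcal{X}}}(1)\cdot C = 0$; the surviving orbifold points are $A_7$ and $A_{10}$, so by the orbifold Noether formula and Corollary~\ref{corollary1} we have $e_{orb}(\tilde{\mathcal{X}}) = 7 - \big(\tfrac{7}{8}+\tfrac{10}{11}\big) = \tfrac{459}{88}$. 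I would then fix an ample orbifold $\mathbb{Q}$-divisor $Q$ of the form $Q = \alpha\,\mathcal{O}_{\tilde{\mathcal{X}}}(1) - \beta C$, tuning $(\alpha,\beta)$ so that $Q^2$ is precisely the positive rational making the twisted bundle below have $c_2^{orb}(E) = c - \tfrac{2}{11}$; the denominator $11$ is forced by the index of the $A_{10}$ point, which is the source of the $\tfrac{1}{88} = \tfrac{1}{8\cdot 11}$ in the orbifold intersection form.

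With $Q$ in hand I would run the orbifold Serre construction (the extension theorem preceding Remark~\ref{rem4.1}) to produce a rank-two locally free orbisheaf $E'$ fitting in
$$0 \to \mathcal{O}_{\tilde{\mathcal{X}}} \to E' \to \mathcal{O}_{\tilde{\mathcal{X}}}(2Q)\otimes \mathcal{I}_Z \to 0,$$
for a codimension-two $Z\subset \tilde X_{reg}$ of length $\ell$, so that $\det E' = 2Q$ and $c_2(E')=\ell$. Local freeness comes from the Cayley--Bacharach property for the pair $(\mathcal{O}^{\vee}\otimes \mathcal{O}(2Q)\otimes K_{\tilde{\mathcal{X}}},Z)$; since $K_{\tilde{\mathcal{X}}} = 0$ this reduces to the numerical condition $\ell > h^0(\mathcal{O}(2Q))$, and $h^0(\mathcal{O}(2Q))$ is computed through orbifold Kawamata--Viehweg vanishing (Dolbeault for orbifolds, as used in Corollary~\ref{corollary1}) from $\chi(2Q) = \chi(\mathcal{O}_{\tilde{\mathcal{X}}}) + \tfrac12(2Q)^2$. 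Twisting down, $E := E'\otimes \mathcal{O}_{\tilde{\mathcal{X}}}(-Q)$ has $c_1^{orb}(E) = 0$ and $c_2^{orb}(E) = \ell - Q^2$; choosing $\ell$ together with the tuned $Q^2$ yields $c_2^{orb}(E) = c - \tfrac{2}{11}$ for each prescribed $c\geq 3$.

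Stability then follows by the same dichotomy as in Lemma~\ref{lemma4.1}: a destabilising sub-line-bundle $N\subset E'$ would satisfy $\mu_Q(N)\geq \mu_Q(E') = Q^2 > 0 = \mu_Q(\mathcal{O}_{\tilde{\mathcal{X}}})$, so $N\not\subset \mathcal{O}_{\tilde{\mathcal{X}}}$, the composite $N\to E'\to \mathcal{O}(2Q)\otimes\mathcal{I}_Z$ is nonzero, and it vanishes on an effective divisor $D\supseteq Z$ with $D\cdot Q \leq Q^2$. The crux, and the step I expect to be the main obstacle, is therefore an orbifold analogue of Lemma~\ref{lm2.4} for $\tilde{\mathcal{X}}$: that no effective orbifold divisor $D$ satisfies $0 < D\cdot Q \leq Q^2$. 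This is delicate precisely because $Q^2$ must be kept small enough that $c_2^{orb}(E) = c - \tfrac{2}{11} < e_{orb}(\tilde{\mathcal{X}})$ (forcing $\alpha' > 0$ in the relevant range $3\leq c\leq 5$) yet large enough that $Q$ remains ample and strictly dominates every effective class; establishing it requires exploiting the rationality of $\text{Pic}^{orb}$ together with the explicit orbifold intersection numbers on $X_{30}$, describing effective classes as nonnegative combinations of $\mathcal{O}_{\tilde{\mathcal{X}}}(1)$ and the single exceptional curve $C$.
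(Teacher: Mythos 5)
Your proposal follows essentially the same route as the paper: an ample $\mathbb{Q}$-Cartier orbifold divisor $Q$ with $Q^2=\tfrac{2}{11}$ (the paper takes $Q=40\pi^*\bar{H}-3\mathcal{L}'$), the orbifold Serre construction with $\ell(Z)=3>h^0(2Q)$, a twist by $Q^{\vee}$ to kill $c_1$, and stability from the absence of effective divisors of small $Q$-degree, verified by decomposing effective classes against $\bar{H}$ and the exceptional curve. The one imprecision is your computation of $h^0(2Q)$: on the orbifold one has $h^0(2Q)=\chi(2Q)=\chi_{orb}(2Q)+\mu_{Sing}(2Q)$, so the naive Riemann--Roch must be supplemented by the correction term $\mu_{Sing}$ of Remark \ref{rem4.3}, which the paper bounds by $\mu_{Sing}(\mathcal{O}_{\tilde{\mathcal{X}}})=\tfrac{10}{11}+\tfrac{21}{32}$ to reach the same threshold $\chi(2Q)\leq 2+\tfrac{4}{11}<3$.
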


\begin{proof}
    Let $\bar{H}=\mathcal{O}_{\mathcal{X}}(1)$, and let $Q=40\pi^*\bar{H}-3\mathcal{L}'$, where $\mathcal{L}'$ is the exceptional divisor of $\pi$. The divisor $Q$ is ample and $Q^2=\frac{2}{11}$.\\
    Just as before, let $\ell_1=h^0(2Q)=\chi(2Q)$ and
    \begin{align*}
        \chi(2Q)&=\chi_{orb}(2Q)+\mu_{Sing}(2Q)\\
         &= \chi_{orb}(\mathcal{O}_{\tilde{\mathcal{X}}}) + \frac{1}{2}(2Q)^2+ \mu_{Sing} (2Q)\\
          &=\chi(\mathcal{O}_{\tilde{\mathcal{X}}})- \mu_{Sing} (\mathcal{O}_{\tilde{\mathcal{X}}})+2Q^2+ \mu_{Sing} (2Q)\\
          &=2-\left(\frac{10}{11}+\frac{21}{32}\right)+\frac{4}{11}+ \mu_{Sing} (2Q),
    \end{align*}
 where $\mu_{sing}$ is  the correction term in the orbifold Riemann-Roch formula (see    Remark   \ref{rem4.3}).
    
Observe that $\mu_{Sing}(2Q)\leq  \mu_{Sing} (\mathcal{O}_{\tilde{\mathcal{X}}}) = \frac{10}{11}+\frac{21}{32}$, so

\begin{align*}
    \chi(2Q)&\leq 2 - \mu_{Sing}(\mathcal{O}_{\tilde{\mathcal{X}}}) + \frac{4}{11} + \mu_{Sing} (\mathcal{O}_{\tilde{\mathcal{X}}})\\
    &=2+\frac{4}{11}<3=\ell.
\end{align*}
    If we apply Serre's construction to $L=\mathcal{O}_{\tilde{\mathcal{X}}}$, $N = 2Q$, and $Z$ such that $\ell(Z) = 3$, then we obtain a vector orbibundle  $E'$ with $c_1^{orb}(E')=2Q$, and $c_2^{orb}(E')= \ell(Z) = 3$.

    Same as before, assume there is an destabilizing sheaf $N\subset E'$. The map $N\rightarrow E'\rightarrow 2Q\otimes \mathcal{I}_Z$ vanishes at an effective divisor $D$ with $D\cdot Q=2Q\cdot Q - N \cdot Q\leq 2Q\cdot Q = \frac{4}{11}$ but there are no effective divisors with $D\cdot Q\leq \frac{4}{11}$.

    The tensor product of a stable bundle by a line bundle is still stable, so define $E:=E'\otimes(Q^{\vee})$, so $c_1^{orb}(E)=0$, and $c_2^{orb}(E)= \ell(Z) - Q^2= 3 - \frac{2}{11}.$
\end{proof}

\begin{re}
  We check that $Q$ is ample by the Nakai-Moishezon criterion. In the case of $\tilde{\mathcal{X}}$ we just have to check $Q\cdot \bar{H}=\frac{19}{88}$, $Q\cdot\mathcal{L}'=18$, and $Q^2=\frac{2}{11}$. Also, observe that any effective divisor $D$ will have $Q\cdot D\ge Q\cdot \bar{H}=\frac{40}{88}>\frac{4}{11}=2Q^2.$ We obtained the divisor $Q$ the following way. Let $Q=a\pi^*H_0-b\mathcal{L}'$ such that $Q\cdot\bar{H}>2Q^2$, so we found positive integer solutions for $\frac{1}{88}a>\frac{1}{44}a^2-4b^2>0$.
\end{re}

\begin{re} \label{rem4.3}
From \cite{Bl} we get that  the correction term in the orbifold Riemann-Roch formula is given by  $$\mu_{Sing}(\mathcal{F}) : =\underset{x\in Sing}{\sum}\mu_{\mathcal{X},x}(\mathcal{F)}, $$ where 
    $$\mu_{\mathcal{X}, x}(\mathcal{F}) := \frac{1}{\#G_x} \cdot \sum_{g \in G_x - \{ \text{id} \}} \frac{\text{trace}(\rho(g))}{\det(I_n - g)},$$
  and $\#G_x$ denotes the cardinality of the finite group $G_x$.
    Proposition 2.6 in \cite{Bl} establishes a bijection of V-free sheaves or free orbisheaves over a quotient germ $(X,x)$, and representations of the local isotropic group $G_x$ up to isomorphism. In the formula $\rho$ is the representation corresponding to $\mathcal{F}$ and $g$ in the local action of $g$ in the coordinates of the local smoothing of the orbifold.
    For the given surface $\tilde{\mathcal{X}}$ which has $A_7,A_{10}$ singularities, the maximum possible $\mu_{Sing}(\mathcal{F})$ for $\mathcal{F}$ rank $1$ is $\frac{10}{11}+\frac{21}{32}$. This is achieved by computing all possible $\mu_{\mathcal{X}, x}(\mathcal{F})$, which are $\#G$ many.
\end{re}

\begin{re}
Integrating  equation $\eqref{SS3}$ we get 
    $$\alpha' \int_{\tilde{\mathcal{X}}} (\text{tr}(R \wedge R) - \text{tr}(F_H \wedge F_H))=\alpha'(e_{orb}(\tilde{\mathcal{X}})-c_2^{orb}(E)),$$
    and the expression $e_{orb}(\tilde{\mathcal{X}})-c_2^{orb}(E)=7-\left(\frac{10}{11}+\frac{7}{8}\right)-3+\frac{2}{11}=\frac{211}{88}>0.$
\end{re}

\section{Proof of  Theorem \ref{theoremA}}  \label{Section5}

The topology of $T^2$ bundles over complex surfaces is determined in \cite{GGP}. The same results hold without change  of the orbisurfaces we consider.  The method was also used to determine the diffeomorphism type of the spaces $M$ in \cite{FGV}. We start by recalling the following result:

\smallskip

\begin{te}[\cite{FGV}] \label{th5.1}
    Let \( X \) be a compact K3 orbisurface with a Ricci-flat Kähler form \( \omega_X \) and orbifold Euler number \( e_{orb}(X) \). Let \( \omega_1 \) and \( \omega_2 \) be anti-self-dual \((1,1)\)-forms on \( X \) such that \( [\omega_1], [\omega_2] \in H^2_{\text{orb}}(X,\mathbb{Z}) \) and the total space \( M \) of the principal \( T^2 \) orbifold bundle \( \pi: M \to X \) determined by them is smooth. Let \( E \) be a stable vector bundle of degree 0 over \( (X, \omega_X) \) such that
\[
\alpha'(e_{orb}(X) - (c_2(E) - \frac{1}{2} c_1^2(E))) = \frac{1}{4\pi^2} \int_X \left( \|\omega_1\|^2 + \|\omega_2\|^2 \right) \frac{\omega_X^2}{2}.
\]
Then \( M \) has a Hermitian structure \(\omega_u \) and there is a metric \( h \) along the fibers of \( E \) such that \( (M, \omega_u, V = \pi^*E, H = \pi^*(h)) \) solves the Hull-Strominger system with   $\nabla$   being the Chern connection on $M$.
\end{te}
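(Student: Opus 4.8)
The plan is to follow the Goldstein-Prokushkin-Fu-Yau strategy, adapted to the orbifold base, and to reduce the three equations of the Hull-Strominger system to a single scalar Monge-Ampère type PDE on $X$ whose solvability is exactly guaranteed by the numerical constraint in the hypothesis. First I would fix the geometry of the total space. Since $[\omega_1],[\omega_2]\in H^2_{orb}(X,\mathbb{Z})$ are of type $(1,1)$ and anti-self-dual with respect to the Ricci-flat $\omega_X$, the Goldstein-Prokushkin construction, in its orbifold version, endows the principal $T^2$-orbibundle $\pi:M\to X$ — smooth by assumption — with an integrable complex structure for which $\pi$ is holomorphic, the fibers are elliptic curves, and $K_M$ is holomorphically trivial; the trivializing $(3,0)$-form $\psi$ is $\pi^*\Omega_X\wedge\theta$, where $\Omega_X$ is the holomorphic $(2,0)$-form on $X$ and $\theta=\theta_1+i\theta_2$ with $d\theta_j=\pi^*\omega_j$. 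I would then take the Fu-Yau ansatz
$$\omega_u=\pi^*(e^{u}\,\omega_X)+\frac{i}{2}\,\theta\wedge\bar\theta,$$
with $u$ a function on $X$ to be determined. A direct Goldstein-Prokushkin computation gives $\|\psi\|_{\omega_u}$ as a constant multiple of $e^{-u}$, so that $d(\|\psi\|_{\omega_u}\omega_u^2)=0$; hence the conformally balanced equation $\eqref{SS4}$ holds for \emph{every} $u$.

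Next I would dispose of the Hermitian-Yang-Mills equation $\eqref{SS1}$. Since $E$ is stable of degree $0$ over $(X,\omega_X)$, the Donaldson-Uhlenbeck-Yau theorem in its orbifold form supplies a Hermitian-Einstein metric $h$ on $E$ whose curvature $F_E$ is of type $(1,1)$ and $\omega_X$-trace-free. Setting $V=\pi^*E$, $H=\pi^*h$, the curvature $F_H=\pi^*F_E$ is again of type $(1,1)$, so $F_H^{2,0}=F_H^{0,2}=0$; and the contraction $F_H\wedge\omega_u^2$ reduces to a multiple of $\pi^*(F_E\wedge\omega_X)\wedge\theta\wedge\bar\theta$, which vanishes because $\mathrm{tr}_{\omega_X}F_E=0$. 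Thus $\eqref{SS1}$ holds for all $u$ as well.

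The crux is the anomaly cancellation equation $\eqref{SS3}$ with $\nabla$ the Chern connection of $\omega_u$. Substituting the ansatz, the $(2,2)$-form identity $i\partial\bar\partial\omega_u=\frac{\alpha'}{4}\mathrm{tr}(R_\nabla\wedge R_\nabla-F_H\wedge F_H)$ descends to $X$ and, after the reduction of Fu and Yau, becomes a fully nonlinear equation for the single scalar $u$, schematically
$$i\partial\bar\partial(e^{u}\omega_X)-\alpha'\,i\partial\bar\partial(e^{-u}\rho)+\alpha'\,i\partial\bar\partial u\wedge i\partial\bar\partial u+\mu\,\frac{\omega_X^2}{2}=0,$$
where $\rho$ and $\mu$ encode $\|\omega_1\|^2+\|\omega_2\|^2$ and the characteristic numbers of $E$. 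Integrating over $X$ kills the $i\partial\bar\partial(\cdots)$ terms and the exact Hessian term $\int_X i\partial\bar\partial u\wedge i\partial\bar\partial u=0$, leaving only $\mu$; this produces precisely
$$\alpha'\big(e_{orb}(X)-(c_2(E)-\tfrac12 c_1^2(E))\big)=\frac{1}{4\pi^2}\int_X(\|\omega_1\|^2+\|\omega_2\|^2)\frac{\omega_X^2}{2},$$
so the hypothesis is exactly the zero-average solvability condition for the equation. I would then invoke the existence theorem of Fu-Yau \cite{FY2}: by the continuity method together with the a priori $C^0$, $C^2$ and higher-order estimates established there, the equation has a smooth solution $u$, with $e^{u}$ normalized to have sufficiently large integral. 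The resulting $\omega_u$ together with $(V,H)$ solves all three equations.

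The main obstacle is this last step, namely securing the a priori estimates for the Fu-Yau equation in the present setting. The delicate points are the uniform $C^0$ bound and the second-order estimate, which are what tame this non-convex, non-concave fully nonlinear PDE; one must also check that Fu-Yau's analysis, carried out originally on a smooth K3, survives over the Ricci-flat orbifold $(X,\omega_X)$. The cleanest way to handle this is to run the estimates on the smooth total space $M$ (equivalently, on $X_{reg}$, whose complement is a set of real codimension $4$), so that the construction is insensitive to the isolated quotient singularities. Granting these estimates — which is exactly the content of \cite{FY2} as adapted to the orbifold base in \cite{FGV} — the theorem follows.
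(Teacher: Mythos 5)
Your proposal is correct and follows essentially the same route as the actual argument: the paper only recalls this theorem from \cite{FGV}, and the proof there is exactly the orbifold adaptation of the Goldstein--Prokushkin construction plus the Fu--Yau ansatz $\omega_u=\pi^*(e^u\omega_X)+\tfrac{i}{2}\theta\wedge\bar\theta$, with \eqref{SS4} and \eqref{SS1} holding automatically (the latter via the orbifold Donaldson--Uhlenbeck--Yau theorem) and \eqref{SS3} reducing to the Fu--Yau Monge--Amp\`ere equation whose integrability condition is the stated numerical identity and whose solvability is taken from \cite{FY2}. Your closing remarks on where the analytic work lies (the $C^0$ and second-order estimates, and their insensitivity to the codimension-four singular set) accurately locate the content that \cite{FGV} imports from \cite{FY2}.
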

 In Table 3 at the end we list the values $e_{orb}(\tilde{X}_k) - (c_2(E) - \frac{1}{2} c_1^2(E))$ for each k.
 
Now Theorem \ref{theor3.1}, Lemma \ref{lemma4.1}, and Lemma \ref{lemma4.2}  give us sufficient conditions to apply Theorem \ref{th5.1} as a last step to prove Theorem \ref{theoremA}. To use Theorem \ref{th5.1} it remains to calculate
$$e_{orb}(X) - (c_2(E) - \frac{1}{2} c_1^2(E))$$
which will end up being a positive rational number. The case for a stable bundle over $\tilde{X}_1$ with $b_2(\tilde{X}_1)=5$ is explained in Remark  \ref{rem4.1}.

In \cite{BG} and \cite{Bl}, the orbifold Euler number $e_{orb}(X)$ is calculated the following way

$$e_{orb}(X)=e(X)-\sum_i\frac{n_i}{n_i+1},$$
where $n_i$ are the $A_{n_i}$ singularities.

The left hand side of the integral comes from the integral

$$\alpha'\int_X(\text{tr}(R\wedge R)-\text{tr}(F_h\wedge F_h))=\alpha'\left(e_{orb}(X)-\left(\int_X c_2^{orb}(E)-\frac{1}{2}c_1^{orb}(E)^2\right)\right).$$

In our construction of $E$ we get that $c_1^{orb}(E)=0$, so the integral reduces to
$$\alpha'\left(e_{orb}(X)-\int_X c_2^{orb}(E)\right).$$
In order to know the number $\int_X c_2^{orb}(E)$ we need to calculate the orbifold Euler characteristic of the bundle, that is
$$\chi_{orb}(X,E)=2\cdot \chi_{orb}(X,\mathcal{O}_X)-\int_Xc^{orb}_2(E).$$\\

From \cite{Bl} we get the formula
$$
\chi(X,F)= \chi_{orb}(X,F)+ \mu_{Sing}(F)
$$
where $\mu_{Sing} (F):=\sum_{x\in \Sigma}\mu_{X,x}(F),$ for any vector bundle $F$. the map $\mu_{Sing}$ is a group homomorphism, so by the exact sequence defining $E$, that is
$$0\rightarrow \mathcal{Q}^\vee \rightarrow E \rightarrow \mathcal{Q}\otimes \mathcal{I}_Z\rightarrow 0,$$
and the fact that $Z\cap \Sigma=\emptyset$, we have that $\mu_{Sing}(E)=\mu_{Sing}(\mathcal{Q})+\mu_{Sing}(\mathcal{Q}^\vee)=2 \mu_{Sing} (\mathcal{O}_X)$. Thus we have
$$
\chi_{orb}(X,E)=2\cdot \chi_{orb}(X,\mathcal{O}_X)-\int_Xc^{orb}_2(E),$$
$$
\chi(X,E)- \mu_{Sing}(E)=2(\chi(X,\mathcal{O}_X)-\mu_{Sing} (\mathcal{O}_X))-\int_Xc^{orb}_2(E),
$$
$$
\int_Xc^{orb}_2(E)=2\chi(X,\mathcal{O}_X)-\chi(X,E)=c_2(E)=\ell(Z)-\mathcal{Q}^2=\ell(Z)-2.
$$
Here $Z$ is chosen such that $\ell(Z)>\chi(2\mathcal{Q}) = 2+2\mathcal{Q}^2$ say $\ell(Z) = 3+2\mathcal{Q}^2=7.$ Thus $\int_Xc^{orb}_2(E)=5,$ and this holds  for any of our bundles. Note that these cases are independent of the orbifold structure as opposed to the case of Lemma \ref{lemma4.2} and Remark \ref{rem4.3}.

The Table 1 at the end of the paper lists the divisors $\mathcal{Q}$ such that $\mathcal{Q}$ is ample and $\mathcal{Q}^2=2$ in blow ups of $X_{36}\subset \mathbb{P}(7,8,9,12)$ which has $A_6, A_7,A_3,A_2$, and Table 2 lists the ones for $\mathcal{Q}$ such that $\mathcal{Q}^2=2$ in blow ups of $X_{30}\subset\mathbb{P}(5,6,8,11)$ which has $A_1, A_7, A_{10}$.
Using the notations of Remark 4.1 we have that  the Picard group of the surface $\tilde{X}_k$ with $b_2(\tilde{X}_k)=k+4$ will be
$${\text{Pic}}(\tilde{X}_k)=\langle H, \mathcal{L}_1, \mathcal{L}_1', \ldots, \mathcal{L}_{k/2}, \mathcal{L}_{k/2}'\rangle$$
in the case where $k$ is even, and
$${\text{Pic}}(\tilde{X}_k)=\langle H,\mathcal{G}, \mathcal{L}_1, \mathcal{L}_1', \ldots, \mathcal{L}_{(k-1)/2}, \mathcal{L}_{(k-1)/2}'\rangle$$
in the case where $k$ is odd. Here $\mathcal{L}_i$ and $\mathcal{L}'_i$ come from blow-ups of $A_n$ singularities with $n>1$, and $\mathcal{G}$ is the exceptional divisor of the blow-up of an $A_1$ singularity, see Lemma 2.1.

Now we consider the cases in Table 1  for even $k$.  Let the $k$-tuple
$(a_0,...,a_{k-1})$ in $\mathbb{Z}^k_+$ denote the divisor
$$\mathcal{Q}=a_0H-a_1\mathcal{L}_1-a_2\mathcal{L}_1'-...-a_{k-2}\mathcal{L}_{k/2} -a_{k-1} \mathcal{L}_{k/2}',$$
such that $a_{2i}>a_{2i-1}$.
We check that $\mathcal{Q}$ is ample using Nakai-Moichezon criteria. Note that a class $[D]\in Cl(\tilde{X}_k)$ will be represented by $H$, $C_i$, or $C_i'$ for some $1\leq i\leq k/2$ where $\mathcal{L}_i'=C_i+C_i'$ , which is the exceptional divisor of the blow up of an $A_{n_i}$ singularity. Thus we have to check the cases $\mathcal{Q}\cdot [D]=\mathcal{Q}\cdot H=a_0H^2>0,$ $\mathcal{Q}\cdot C_i=-a_{2i-1}\mathcal{L}_i-a_{2i}\mathcal{L}_i'$. For the first case we have 
 \[ \mathcal{Q}\cdot [D] = \begin{cases}
          \mathcal{Q}\cdot H=a_0H^2>0, \\
          \mathcal{Q}\cdot C_i=-a_{2i-1}\mathcal{L}_i\cdot C_i-a_{2i}\mathcal{L}_i'\cdot C_i =a_{2i-1}n_i+a_{2i}>0,  \\
          \mathcal{Q}\cdot C_i'=-a_{2i-1}\mathcal{L}_i\cdot C_i'-a_{2i}\mathcal{L}_i'\cdot C_i' =a_{2i}-a_{2i-1} >0.
       \end{cases}
    \]    
Now, we consider the cases in Table 2 for odd $k>1$. Let the $k$-tuple
$(a_0,...,a_{k-1})$ in $\mathbb{Z}^k_+$ denote the divisor
$$
\mathcal{Q}=a_0H-a_1\mathcal{G}-a_2\mathcal{L}_1-a_3\mathcal{L}_1'-...-a_{k-2}\mathcal{L}_{(k-1)/2} -a_{k-1} \mathcal{L}_{(k-1)/2}'.
$$
Again we check for ampleness, the class $[D]\in Cl(\tilde{X}_k)$ will be represented by $H$, $C_i$, $C_i'$, or $G$, where $H$, $C_i$, and $C_i'$ are the same as before; and $G$ is the class representing the exceptional curve of the blow up of an $A_1$ singularity, and $\mathcal{G}$  the corresponding line bundle. For this case we have
\[ \mathcal{Q}\cdot [D] = \begin{cases}
          \mathcal{Q}\cdot H=a_0H^2>0 \\
          \mathcal{Q}\cdot C_i=-a_{2i}\mathcal{L}_i\cdot C_i-a_{2i+1}\mathcal{L}_i'\cdot C_i=a_{2i}n_i+a_{2i+1}>0  \\
          \mathcal{Q}\cdot C_i'=-a_{2i}\mathcal{L}_i\cdot C_i'-a_{2i+1}\mathcal{L}_i'\cdot C_i'=a_{2i+1}-a_{2i}>0,\\
          \mathcal{Q}\cdot G = -a_1\mathcal{G}\cdot G = 2a_1>0.
       \end{cases}
    \]
This concludes the proof of the main Theorem.  Below are the three tables, to which we referred above in this Section.
\vspace{1 in}

\begin{table}[h] 
    \centering
    \caption{Surfaces obtained by blowing up singularities $A_6, A_7, A_3, A_2$ of $X_{36}\subset \mathbb{P}(7,8,9,12)$}
    \begin{tabular}{|c|c|c|c|c|}
        \hline
        \textbf{Surface} & \textbf{Blow up} & \textbf{Singularities} & $\mathcal{Q}$\\
        \hline
        $\tilde{X}_2$ & $A_3$ & $A_6, A_7, A_1, A_2$& $(14, 11, 28)$\\
        $\tilde{X}_4$ & $A_2$ & $A_6, A_7, A_1$& $(6, 2, 10, 6, 9)$\\
        $\tilde{X}_6$ & $A_6$ & $A_4, A_7, A_1$& $(5, 1, 5, 2, 11, 1, 3)$\\
        $\tilde{X}_8$ & $A_4$ & $A_2, A_7, A_1$ & $(5, 3, 7, 1, 6, 1, 3, 1, 3)$\\
        $\tilde{X}_{10}$ & $A_2$ & $A_7, A_1$& $(5, 1, 6, 3, 8, 1, 3, 1, 3, 1, 3)$ \\
        $\tilde{X}_{12}$ & $A_7$ & $A_5, A_1$ & $(5, 1, 4, 1, 8, 1, 3, 1, 3, 1, 3, 1, 3)$\\
        $\tilde{X}_{14}$ & $A_5$ & $A_3, A_1$ & $(5, 1, 5, 1, 5, 1, 3, 1, 3, 1, 3, 1, 3, 1, 3)$ \\
        $\tilde{X}_{16}$ & $A_3$ & $A_1, A_1$& $(6, 1, 5, 4, 8, 1, 3, 1, 3, 1, 3, 1, 3, 1, 3, 1, 3)$ \\
        \hline
    \end{tabular}
\end{table}
\begin{table}[h]
    \centering
    \caption{Surfaces obtained by blowing up singularities $A_6, A_7, A_1, A_4$ of $X_{50}\subset \mathbb{P}(7,8,10,25)$}
    \begin{tabular}{|c|c|c|c|c|}
        \hline
        \textbf{Surface} & \textbf{Blow up} & \textbf{Singularities} & $\mathcal{Q}$\\
        \hline
        $\tilde{X}_3$ &$A_1,A_4$& $A_6, A_7, A_2$& $(5, 3, 2, 7)$\\
        $\tilde{X}_5$  &$A_2$& $A_6, A_7$& $(5, 2, 1, 4, 2, 6)$\\
        $\tilde{X}_7$ &$A_6$& $A_4, A_7$ & $(5, 2, 1, 4, 1, 4, 1, 5)$\\
        $\tilde{X}_{9}$ &$A_4$& $A_2, A_7$& $(6, 2, 1, 4, 2, 6, 1, 5, 1, 3)$ \\
        $\tilde{X}_{11}$ &$A_2$& $A_7$ & $(6, 2, 1, 4, 2, 5, 1, 5, 1, 3, 1, 3)$\\
        $\tilde{X}_{13}$ &$A_7$& $A_5$ & $(8, 2, 1, 4, 1, 12, 1, 3, 1, 3, 1, 3, 1, 3)$ \\
        $\tilde{X}_{15}$ &$A_5$& $A_3$& $(8, 3, 1, 5, 3, 6, 1, 3, 1, 3, 1, 3, 1, 3,1,3)$ \\
        $\tilde{X}_{17}$ &$A_3$& $A_1$& $(7, 2, 1, 5, 1, 4, 1, 3, 1, 3, 1, 3, 1, 3, 1, 3, 1, 3)$ \\
        \hline
    \end{tabular}
\end{table}

\newpage

\begin{table}[h]
    \centering
    \caption{Values of $e_{orb}(X)-c_2^{rb}(E)$ with $\mathcal{Q}^2=2$; $\tilde{X}_{1}$ is the blow up of the $A_1$ singularity of $X_{30}\subset \mathbb{P}(5, 6, 8, 11)$, the rest of the odd entries are blow ups of $X_{50}\subset \mathbb{P}(7,8,10,25)$, and the even entries are blow ups of $X_{36}\subset \mathbb{P}(7,8,9,12)$.} 
    \begin{tabular}{|c|c|c|c|}
        \hline
        \textbf{Surface} & \textbf{Singularities} & $e(X)$ & $e_{orb}(X)-c_2^{orb}(E)$ \\
        \hline
        $\tilde{X}_{1}$ & $A_7, A_{10}$ & $7$ & $211/88$\\
        $\tilde{X}_{2}$ & $A_6, A_7, A_1, A_2$ & $8$ & $17/168$\\
        $\tilde{X}_{3}$ & $A_6, A_7, A_2$& $9$ & $269/168$\\
        $\tilde{X}_{4}$ & $A_6, A_7, A_1$ & $10$ &  $155/56$\\
        $\tilde{X}_{5}$ & $A_6, A_7$ &  $11$ & $239/56$\\
        $\tilde{X}_{6}$ & $A_4, A_7, A_1$ & $12$ & $139/40$\\
        $\tilde{X}_{7}$ & $A_4, A_7$ & $13$ & $253/40$\\
        $\tilde{X}_{8}$ & $A_2, A_7, A_1$ & $14$ &  $57/8$\\
        $\tilde{X}_{9}$ & $A_2, A_7$ & $15$ & $203/24$\\
        $\tilde{X}_{10}$ & $A_7, A_1$&  $16$ & $77/8$ \\
        $\tilde{X}_{11}$ & $ A_{7}$& $17$ & $89/8$\\
        $\tilde{X}_{12}$ & $A_5,A_1$ & $18$ & $35/3$\\
        $\tilde{X}_{13}$ & $A_5$&  $19$ & $79/6$\\
        $\tilde{X}_{14}$ & $A_3,A_1$& $20$ & $55/4$\\
        $\tilde{X}_{15}$ & $A_3$ & $21$ & $61/4$\\
        $\tilde{X}_{16}$ & $A_1,A_1$& $22$ & $16$\\
        $\tilde{X}_{17}$ & $A_1$& $23$ & $35/2$\\
        \hline
    \end{tabular}
\end{table}

\smallskip
\textbf{Acknowledgements}.
The authors would like to thank Tony Pantev and Mirroslav Yotov for useful and helpful discussions and suggestions.
Anna Fino is partially supported by Project PRIN 2022 \lq \lq Geometry and Holomorphic Dynamics”, by GNSAGA (Indam) and by a grant from the Simons Foundation (\#944448).  Gueo Grantcharov is partially supported by a grant from the Simons Foundation (\#853269). We also thank the anonymous reviewers for the useful comments.
\smallskip

\end{document}